\numberwithin{equation}{section}
\newtheorem{thm}[equation]{Theorem}
\newtheorem{prop}[equation]{Proposition}
\newtheorem{lemma}[equation]{Lemma}
\newtheorem{cor}[equation]{Corollary}
\newtheorem{dfn}[equation]{Definition}
\newtheorem{clm*}{Claim}
\theoremstyle{remark}
\newtheorem{rmk}[equation]{Remark}
\newtheorem{example}[equation]{Example}
\newcommand {\iso} {\mathrm{ISO}(X)}
\newcommand {\isom} {\mathrm{ISO}_{\m}(X)}
\newcommand {\fix} [1]{\mathrm{Fix}(#1)}
\newcommand{\R}{\mathcal{R}}
\newcommand{\D}[3]{D_{#1} (#2,#3)}
\newcommand{\del}{\delta_{\epsilon}}
\newcommand{\x}{x_{\epsilon}}
\newcommand{\re}{r_{\epsilon}}
\newcommand{\Lam}{\Lambda_{\epsilon}}
\newcommand{\optgeo}[2]{\mathrm{OptGeo}(#1,#2)}
\newcommand{\geo}{\mathrm{Geo}}
\newcommand{\rcd}[2]{\mathsf{RCD}^* (#1,#2)}
\newcommand{\RCD}{\mathsf{RCD}^*}
\newcommand{\cd}[2]{\mathsf{CD}^* (#1,#2)}
\newcommand{\CD}{\mathsf{CD}^*}
\newcommand{\mcp}[2]{\mathsf{MCP}(#1,#2)}
\newcommand{\MCP}{\mathsf{MCP}}
\newcommand{\m}{\mathfrak{m}}
\newcommand{\dis}{\mathrm{d}}
\newcommand{\p}{\mathcal{P}_2}
\newcommand{\e}{\mathrm{e}}
\newcommand{\supp}{\mathrm{supp}}
\newcommand{\RR}{\mathbb R}
\newcommand{\N}{\mathbb N}
\newcommand{\mms}{\mathsf{mms}}
\newcommand{\ssp}{\mathsf{ssp}}
\newcommand{\pGH}{\mathsf{pGH}}
\newcommand{\peGH}{\mathsf{peGH}}
\newcommand{\eGH}{\mathsf{eGH}}
\newcommand{\mGH}{\mathsf{mGH}}
\newcommand{\Dn}{\mathsf{D}^n}
\newcommand{\Ln}{\mathsf{L}^n}
\newcommand{\rest}[2]{\mathsf{rest}_{#1}^{#2}}
\newcommand{\nt}{\mathfrak{n}_t}
\newcommand{\G}{\mathsf{G}}
\newcommand{\conditiona}{\textbf{($\mathsf{a}$)} }
\mathchardef\-="2D
\newcommand\reallywidehat[1]{%
\savestack{\tmpbox}{\stretchto{%
  \scaleto{%
    \scalerel*[\widthof{\ensuremath{#1}}]{\kern-.6pt\bigwedge\kern-.6pt}%
    {\rule[-\textheight/2]{1ex}{\textheight}}
  }{\textheight}%
}{0.5ex}}%
\stackon[1pt]{#1}{\tmpbox}%
}
\DeclareFontFamily{U}{matha}{\hyphenchar\font45}
\DeclareFontShape{U}{matha}{m}{n}{
  <-6> matha5 <6-7> matha6 <7-8> matha7
  <8-9> matha8 <9-10> matha9
  <10-12> matha10 <12-> matha12
  }{}
\DeclareSymbolFont{matha}{U}{matha}{m}{n}
\DeclareMathSymbol{\Lt}{3}{matha}{"CE}
\begin{document}
\title{The isometry group of an $\RCD$-space is Lie}
\author{Gerardo Sosa}
\thanks{Max Planck Institute for Mathematics in the Sciences,
Leipzig 04103, Germany. Email: \textsf{gsosa@mis.mpg.de.}}  
\begin{abstract}
We give necessary and sufficient conditions that show that both the group of isometries and the group of measure-preserving isometries are Lie groups for a large class of metric measure spaces. In addition we study, among other examples, whether spaces having a generalized lower Ricci curvature bound fulfill these requirements. The conditions are satisfied by $\RCD$-spaces and, under extra assumptions, by $\mathsf{CD}$-spaces, $\CD$-spaces, and $\MCP$-spaces.  However, we show that the $\MCP$-condition by itself is not enough to guarantee a smooth behavior of these automorphism groups.
\end{abstract}
\maketitle

\section{Introduction}
We give necessary and sufficient conditions to assure that the group of isometries and the group of measure-preserving isometries are Lie groups for a certain class of metric measure spaces. Additionally we analyze spaces that fulfill these assumptions, for example spaces that satisfy a particular curvature-dimension condition. Such is the case of $\RCD$-spaces, and of $\mathsf{CD}$/$\CD$-spaces, and $\MCP$-spaces satisfying mild hypotheses. More generally we show that spaces with \emph{good optimal transport} properties meet as well the hypotheses. 

In certain classes of spaces the full group of isometries, $\iso$, is known to be a Lie group. For example, Myers and Steenrod proved this fact for Riemannian Manifolds in \cite{My-St}, Fukaya and Yamaguchi for Alexandrov spaces with curvature bounded by above and by below in \cite{Fu-Ya-94, Ya}, and Cheeger, Colding, and Naber in the case of Ricci Limit spaces in \cite{Ch-Co-II,Co-Na}. In contrast, there exist metric spaces for which $\iso$ is not a Lie group, see for instance Examples \ref{ex:01} and \ref{ex:02}. 

To state the results let $(X,\dis)$ be a complete, separable metric space and assume that $\m$ is a fully supported Borel measure on $X$ which is finite on every bounded set. We call the triple $(X,\dis,\m)$ a metric measure space, $\mms$ for short, and let $\G\in\{\iso,\isom\}$ denote either the group of isometries or the group of measure-preserving isometries of $X$. The fixed point set of an isomorphism $f\in\G$ is denoted by $\fix{f}$.

\begin{thm}[$\mms$ with smooth automorphism groups]\label{th:03}
Let $(X,\dis,\m)$ be a locally compact $\mms$ where every closed ball coincides with the closure of its respective open ball. Assume that $X$ has $\m$-$a.e.$ Euclidean tangent cones.Then $\G$ is a Lie group if and only if:

\emph{\conditiona} There exist $x\in X$ and constants $0<s$, $0<\mathsf{FIX}<\m(B_s(x))$ such that for every  $(\mathbb{I}\neq) g\in \G$
\begin{equation*}
\m(\fix{g}\cap B_s(x) ) < \mathsf{FIX}.
\end{equation*}
\noindent Moreover if $\iso$ is a Lie group then $\isom$ is so as well.
\end{thm}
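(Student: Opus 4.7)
My plan is to reduce the theorem, via the solution of Hilbert's fifth problem, to showing that the no-small-subgroups property (NSS) on $\G$ is equivalent to \conditiona. Recall the Gleason--Yamabe--Montgomery--Zippin theorem: a locally compact Hausdorff topological group is Lie if and only if some neighborhood of the identity contains no non-trivial subgroup. As a preliminary one verifies that $\G$, with the compact-open topology, is a locally compact Polish group acting continuously on $X$; local compactness follows from Arzel\`a--Ascoli applied to the equicontinuous family of isometries, and the hypothesis that every closed ball equals the closure of its open ball is used to keep pointed Gromov--Hausdorff limits of isometries within $\G$. The ``moreover'' clause is then immediate from Cartan's closed subgroup theorem, since $\isom$ is closed in $\iso$.

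For \conditiona $\Rightarrow$ NSS, I would argue by contradiction. Given non-trivial subgroups $H_k\subset\G$ contained in shrinking neighborhoods of $\mathbb{I}$, pick $h_k\in H_k\setminus\{\mathbb{I}\}$. Rescaling the metric at any point $p$ with Euclidean tangent cone $\RR^n$ and passing to the pointed Gromov--Hausdorff limit yields isometries of $\RR^n$ forming a small subgroup of $\iso(\RR^n)=O(n)\ltimes\RR^n$. As the latter is Lie and thus NSS, these limit isometries must be trivial, so each $h_k$ acts as the identity on the tangent cone at every Euclidean tangent-cone point. Combining the $\m$-a.e. Euclidean tangent-cone hypothesis with Lebesgue differentiation, $\fix{h_k}$ has density one at $\m$-a.e.\ point of $B_s(x)$, whence $\m(\fix{h_k}\cap B_s(x))=\m(B_s(x))$, contradicting \conditiona. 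The reverse NSS $\Rightarrow$ \conditiona direction uses the same blow-up contrapositively: if \conditiona fails, a sequence $g_k\neq\mathbb{I}$ with $\m(\fix{g_k}\cap B_s(x))\to\m(B_s(x))$ has density points fixed by $g_k$ at which $g_k$ acts trivially on the Euclidean tangent cone, forcing $g_k\to\mathbb{I}$ in $\G$; a diagonal extraction then produces non-trivial subgroups inside arbitrarily small neighborhoods of $\mathbb{I}$, violating NSS.

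The principal obstacle is the blow-up step itself: controlling the pointed Gromov--Hausdorff convergence of the isometries $h_k$ under metric rescaling at a regular point, and verifying that their limits assemble into an honest small subgroup of $\iso(\RR^n)$. In the reverse direction, one must translate ``trivial action on tangent cones at fixed density points'' into genuine $\G$-proximity to $\mathbb{I}$, which requires some care when the density points are scattered and not themselves compact. The Lebesgue differentiation argument connecting fixed-set measure to the induced tangent-cone action relies essentially on the $\m$-a.e. Euclidean tangent-cone hypothesis; once this bridge between the topological structure of $\G$ and the measure of $\fix{g}$ is in place, the rest of the argument is formal.
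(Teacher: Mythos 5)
Your overall skeleton (Gleason--Yamabe plus a blow-up at regular points plus the measure of fixed-point sets) is the same as the paper's, but both directions of the equivalence contain genuine gaps at exactly the points you flag as ``obstacles''. In the direction \conditiona $\Rightarrow$ NSS, the chain ``limit isometries are trivial $\Rightarrow$ $h_k$ acts as the identity on the tangent cone at every regular point $\Rightarrow$ $\fix{h_k}$ has density one $\m$-a.e.'' does not work. First, the rescaled maps $h_k:(X,\dis/r,p)\to(X,\dis/r,p)$ only converge to an isometry of the tangent cone when $\dis(p,h_k(p))=O(r)$, and the limit is automatically trivial (hence uninformative) when the displacement is $o(r)$; to get a \emph{non-trivial} small subgroup of $\mathrm{ISO}(\RR^k)$ one must blow up at a critical scale where the displacement is a definite fraction of the radius. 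This is what the paper's intermediate-value-theorem selection of $\re$ with $\D{\Lam}{\re}{\x}=\re/20$ accomplishes, and nothing in your sketch replaces it. Second, even at a point fixed by $h_k$ where the induced tangent action is trivial, the inference that $\fix{h_k}$ has density one there is precisely the local-to-global rigidity that is available for Alexandrov spaces (non-branching) and Ricci limits (connectedness of the regular set) but fails in the generality of this theorem; the paper explicitly avoids it, deriving its contradiction instead from the mere existence of one $\epsilon$-regular point moved by a small positive amount (which follows because $\m((\R)_{\epsilon,\delta}\cap B_s(x))>\xi\,\m(B_s(x))$ while $\m(\fix{g}\cap B_s(x))<\xi\,\m(B_s(x))$). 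Your appeal to Lebesgue differentiation is also unavailable: $\m$ is an arbitrary Borel measure with no doubling hypothesis, so no density theorem holds.

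In the direction NSS $\Rightarrow$ \conditiona, ``a diagonal extraction then produces non-trivial subgroups inside arbitrarily small neighborhoods'' is not an argument: extraction yields a sequence $g_k\to\mathbb{I}$, which exists in every non-discrete group and does not contradict NSS. The missing mechanism is that $\fix{g}\subseteq\fix{g^n}$, so a lower bound $\m(\fix{g}\cap B_s(x))\geq\mathsf{FIX}$ is inherited by every element of $\langle g\rangle$, and the quantitative implication ``fixed set of nearly full measure in $B_N(x)$ forces displacement $<2/N$ on $B_N(x)$'' (which uses full support of $\m$ to bound $\inf_{y}\m(B_{1/N}(y)\cap B_N(x))$ away from zero) then places the \emph{entire} cyclic subgroup in a prescribed neighborhood of $\mathbb{I}$. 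This is the content of the paper's Proposition \ref{pr:bigfix implies ssp} and of the $\langle g\rangle$ step in the proof of Theorem \ref{th:03}. Your treatment of the ``moreover'' clause via Cartan's theorem is fine once one proves $\isom$ is closed in $\iso$ (the paper's Lemma \ref{cl:01}), though the paper's route --- a subgroup of a group with no small subgroups has no small subgroups --- is more elementary.
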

Furthermore, by a theorem of van Danzig and van der Waerden \cite{Da-Wa} and Lemma \ref{cl:01} we can conclude that $\iso$ and $\isom$ are compact if $X$ is compact. 

As a matter of fact Theorem \ref{th:03} remains valid when considering $\mms$ in which the tangent cones are \emph{well-behaved} $\m$-almost everywhere yet might fail to be Euclidean, see Remark \ref{rm:generalization} for the precise statement. For example, such situation  arises when tangent cones are normed spaces or Carnot groups with a uniform positive bound on the size of their subgroups of isometries. Accordingly, we are also able to study spaces with Finsler-like geometries rather than only Riemannian ones. We recall a result of relevance in this direction due to Le Donne \cite{Do}  that states that  geodesic spaces with a doubling measure that have $\m$-$a.e.$ unique tangents have $\m$-$a.e.$ Carnot groups as tangents.
 
In view of the remark the following are examples of $\mms$ satisfying condition \conditiona and the hypotheses of Theorem \ref{th:03}: Weighted Riemannian manifolds and Finsler manifolds with the Holmes-Thompson or Busemann-Hausdorff volume measure; correspondingly, Alexandrov spaces with curvature bounded below and a class of their Finsler counterpart, Busemann concave spaces \cite{Ke-16}, both endowed with the Hausdorff measure. One may ask whether these hypotheses are also granted by weaker curvature bounds. For example by \allowbreak  \textit{curvature\hyp{}dimension conditions} which use optimal mass transport theory to generalize the notion of a lower Ricci curvature bound to metric measure spaces. These conditions are variations that developed from an initial condition introduced independently by Lott and Villani and by Sturm in \cite{Lo-Vi,St-I,St-II}.
Important contributors to these developments are L. Ambrosio, K. Bacher, M. Erbar, K. Kuwada, N. Gigli, A. Mondino, T. Rajala, G. Savar\'e, and K.T. Sturm. For a historical recount one can consult for example the introductions of \cite{Mo-Na,Er-Ku-St}. 

We consider spaces satisfying the \textit{Riemannian curvature-dimension condition}, the \textit{(reduced) curvature-dimension condition}, and the \textit{measure contraction property}, and write $\RCD$, $(\CD)\mathsf{CD}$, and $\mathsf{MCP}$, for short. 
The relation between these spaces can be written as  $\RCD\-\text{spaces}\subsetneq (\mathsf{CD}\-)\CD\-\mathrm{spaces}\subsetneq \mathsf{MCP}\-\mathrm{spaces},$ where all inclusions are proper. 
 It turns out that $\RCD$-spaces have smooth isomorphism groups. 
\begin{cor}[Automorphisms of $\RCD$-spaces]\label{th:01}
Let $K \in \mathbb{R}$, $N\in [1,\infty)$, and $(X,\dis,\m)$ be an $\rcd{K}{N}$-space. Then the groups $\isom$ and $\iso$ are Lie groups. 
\end{cor}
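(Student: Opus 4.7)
The plan is to apply Theorem~\ref{th:03} to $(X,\dis,\m)$, so I need to verify its three structural hypotheses together with condition \conditiona. The structural assumptions are standard consequences of the $\RCD$ condition: local compactness follows from local doubling (Bishop--Gromov) combined with the completeness assumption; closed balls coincide with closures of open balls because $X$ is geodesic, so each point of the metric sphere $\{d(\cdot,x)=r\}$ is the endpoint of a minimizing segment from $x$ whose interior lies in $B_r(x)$; and the $\m$-almost everywhere existence of Euclidean tangent cones is the content of the Mondino--Naber structure theorem for $\RCD$-spaces, with the refinement of Brué--Semola to an $\m$-a.e.\ constant essential dimension $n$.

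The real content is condition \conditiona. I plan to obtain it in the stronger form
\[
\m(\fix{g})=0\quad\text{for every } (\mathbb{I}\neq)\,g\in\iso,
\]
from which \conditiona follows immediately by picking any ball with $0<\m(B_s(x))<\infty$ (such balls exist since $\m$ is fully supported and locally finite) and setting $\mathsf{FIX}:=\m(B_s(x))/2$. Suppose by contradiction that $\m(\fix{g})>0$ for some non-identity $g$. By Lebesgue differentiation on the locally doubling space $(X,\dis,\m)$ combined with the structure theorem, $\m$-a.e.\ point of $\fix{g}$ is simultaneously a density point of $\fix{g}$ and a regular point with Euclidean tangent $\RR^n$. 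Fix such a $p$. A standard blow-up at $p$ turns $g$ into a metric isometry $\hat g$ of the tangent $\RR^n$, and the density-$1$ condition passes to the limit to show that $\fix{\hat g}$ has full Lebesgue density at the origin. Since any isometry of $\RR^n$ whose fixed set has positive Lebesgue measure must be the identity, $\hat g=\mathrm{Id}_{\RR^n}$.

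The step I expect to be the main obstacle is upgrading this pointwise infinitesimal rigidity at a single $p$ to the global statement $g=\mathbb{I}$. My plan is to exploit the strong optimal-transport features of $\rcd{K}{N}$-spaces, in particular essential non-branching and $\m$-a.e.\ uniqueness of geodesics: for $\m$-a.e.\ $q$ close to $p$ the unique minimizing segment $\gamma$ from $p$ to $q$ has a well-defined initial direction in the tangent at $p$, and since $\hat g$ acts trivially on that direction, $g\circ\gamma$ shares the initial data of $\gamma$, so non-branching forces $g\circ\gamma=\gamma$ and hence $g(q)=q$. Iterating along concatenations of geodesics and invoking the connectedness of $X$ (as a geodesic space) propagates fixity to all of $X$, yielding $g=\mathbb{I}$ and the desired contradiction. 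With \conditiona verified, Theorem~\ref{th:03} gives that $\iso$ is a Lie group, and the subgroup $\isom$ is then Lie by the final sentence of the same theorem.
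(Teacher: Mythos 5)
Your overall architecture is the paper's: check the structural hypotheses of Theorem \ref{th:03} (local compactness from Bishop--Gromov, the closed-ball condition from geodesicity, $\m$-a.e.\ Euclidean tangents from Theorem \ref{th:Mo-Na}) and then obtain condition \conditiona in the strong form $\m(\fix{g})=0$ for every $g\neq\mathbb{I}$, which is indeed what the paper establishes (Corollary \ref{co:01}). The gap is in how you prove that strong form, specifically in the propagation step. Essential non-branching, and the non-branching of the good sets $\Gamma$ produced by Theorem \ref{th:Gi-Ra-St}, is a statement about geodesics that \emph{coincide on a time interval of positive length}: if $\gamma^1|_{[0,t]}=\gamma^2|_{[0,t]}$ for some $t>0$, then $\gamma^1=\gamma^2$. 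Your blow-up only gives that $\gamma$ and $g\circ\gamma$ have the same \emph{infinitesimal} data at $p$ (identical limits under rescaling, because $\hat g=\mathrm{Id}$), which is strictly weaker: two distinct geodesics issuing from $p$ toward $q$ and $g(q)$ can have the same blow-up at $p$ without agreeing on any $[0,t]$ with $t>0$, since a metric geodesic is not determined by its starting point and a tangent direction. Hence ``non-branching forces $g\circ\gamma=\gamma$'' does not follow, $g(q)=q$ is unjustified, and the global conclusion $g=\mathbb{I}$ is not reached. This is exactly the failure of the Myers--Steenrod propagation that the paper's introduction flags as the reason new arguments are needed.

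The paper avoids the blow-up entirely at this stage. Lemma \ref{lm:02} argues as follows: if $\m(\fix{f})>0$ for some $f\neq\mathbb{I}$, take $A\subset\fix{f}$ of positive measure, $x\notin\fix{f}$, set $\mu_0=\m(A)^{-1}\m|_A$ and $\mu_1=\tfrac12(\delta_x+\delta_{f(x)})$, and use $f$ and $f^{-1}$ to swap the two branches of the optimal geodesic plan; this produces a second, distinct optimal plan with the same marginals and the same cost, contradicting the uniqueness guaranteed by Theorem \ref{th:Gi-Ra-St} for essentially non-branching $\MCP$ (hence $\RCD$) spaces. If you replace your blow-up-and-propagate step by this transport argument, the rest of your outline --- the choice $\mathsf{FIX}=\m(B_s(x))/2$, the verification of the structural hypotheses, and deducing that $\isom$ is Lie from the last sentence of Theorem \ref{th:03} --- goes through.
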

During the completion of this manuscript Guijarro and Santos-Rodr\'iguez  proved independently in \cite{Gu-Sa}  that $\isom$ is a Lie group.  

Examples of $\RCD$-spaces are Alexandrov and  Ricci limit spaces with the Hausdorff measure, generalized cone constructions over $\RCD$-spaces, and limits of weighted manifolds with a lower bound on the Bakry-Emery Ricci tensor \cite{Pe,Lo-Vi,St-I,St-II,Ke}.   However, it is not known whether the class of $\RCD$-spaces is strictly bigger than that of weighted Ricci limit spaces.
Additionally, we consider spaces satisfying different curvature-dimension conditions. Recall that an $\mms$ is essentially non-branching and satisfies the  ($\mathsf{CD}$-)$\CD$-condition if and only if it satisfies the strong ($\mathsf{CD}$-)$\CD$-condition; for these results and definitions see \cite{Ra-St,Ca-Mo}.  
\begin{cor}[Automorphisms of $\mathsf{CD}$-, $\CD$-, and $\MCP$-spaces]\label{co:02}
Let $K$ and $N$ be as above. The groups $\iso$ and $\isom$ are Lie groups for strong $\mathsf{CD}$\emph{/}$\cd{K}{N}$-spaces and essentially non-branching $\mcp{K}{N}$-spaces that have $\m$-$a.e.$ Euclidean tangents.
\end{cor}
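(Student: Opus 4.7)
The plan is to verify the hypotheses of Theorem \ref{th:03} for each of the three classes. The $\m$-a.e.\ Euclidean tangent assumption is part of the statement, so the remaining work is local compactness, the equality of closed balls with closures of open balls, and \conditiona from that theorem.

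For local compactness, the local Bishop--Gromov-type doubling inequality available in strong $\mathsf{CD}$/$\cd{K}{N}$ and $\mcp{K}{N}$ spaces, combined with completeness of the $\mms$, yields properness. The equality $\overline{B_s(x)}=\{y:\dis(x,y)\le s\}$ is standard in geodesic spaces by approximation along a connecting geodesic, and the spaces considered here are geodesic by virtue of the existence of $W_2$-geodesics between Dirac measures built into the $\mathsf{CD}$ and $\MCP$ axioms.

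The heart of the proof is \conditiona of Theorem \ref{th:03}. My strategy is to establish the stronger fact that $\m(\fix{g})=0$ for every $(\mathbb{I}\neq)g\in\G$, from which \conditiona follows by taking $\mathsf{FIX}$ strictly less than $\m(B_s(x))$ for a fixed choice of $x$ and $s$. Proceeding by contradiction, suppose $\m(\fix{g}\cap B)>0$ for some ball $B$. By the doubling property almost every point of $\fix{g}\cap B$ is a Lebesgue density point, and the tangent hypothesis lets me choose such a point $y$ whose tangent cone is $(\RR^n,0,\Ln)$. Blowing up $g$ along a rescaling sequence and extracting a pointed measured Gromov--Hausdorff limit produces an isometry of $\RR^n$ fixing the origin whose fixed set has full Lebesgue density; since the fixed set of any orthogonal transformation of $\RR^n$ is either all of $\RR^n$ or a proper subspace of measure zero, the limit must be the identity.

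The main obstacle is promoting this infinitesimal rigidity to the global conclusion $g=\mathbb{I}$, and here I would rely on the good optimal transport structure of the space. For strong $\mathsf{CD}$/$\CD$ spaces, essentially non-branching (equivalent to the strong $\mathsf{CD}$ axiom under $\mathsf{CD}$ by \cite{Ra-St,Ca-Mo}) together with uniqueness of $W_2$-geodesics between $\m$-absolutely continuous measures forces $g$ to preserve the unique geodesic joining $y$ to a typical endpoint $z$; infinitesimal triviality at $y$ combined with non-branching pins down each such geodesic, so $g$ fixes an $\m$-conull set and hence equals $\mathbb{I}$ by continuity. The essentially non-branching $\mcp{K}{N}$ case proceeds analogously via the disintegration of $\m$ along rays of optimal transport from $y$ provided by the $\MCP$-condition. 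Theorem \ref{th:03} then yields that both $\iso$ and $\isom$ are Lie groups.
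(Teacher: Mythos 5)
Your top-level reduction agrees with the paper's: verify the hypotheses of Theorem \ref{th:03} and obtain condition \conditiona from the stronger statement that $\m(\fix{g})=0$ for every non-trivial $g\in\G$. The preliminary verifications (properness via doubling, closed balls as closures of open balls in a geodesic space) are fine. The gap is in your proof that $\m(\fix{g})=0$, specifically in the ``promotion'' step. The blow-up at a density point $y$ of $\fix{g}$ only yields that $\sup_{z\in B_r(y)}\dis(z,g(z))=o(r)$ as $r\to 0$; to conclude $g=\mathbb{I}$ you assert that ``infinitesimal triviality at $y$ combined with non-branching pins down each such geodesic.'' That is the classical Alexandrov/Riemannian argument (zero angle at $y$ plus non-branching forces the geodesics from $y$ to $z$ and from $y$ to $g(z)$ to coincide), and it is not available here: essential non-branching is a statement about $\pi$-a.e. geodesic of an optimal plan, not about arbitrary pairs of geodesics emanating from a fixed point $y$; moreover two geodesics with $\dis(\gamma_t,\sigma_t)=o(t)$ need not share an initial segment, which is the input any non-branching hypothesis would require. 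There is also no reason, before knowing $g(z)=z$, why $g$ should ``preserve the unique geodesic joining $y$ to a typical endpoint $z$'' --- it maps it to the geodesic joining $y$ to $g(z)$. The introduction of the paper flags exactly this obstruction as the reason the standard blow-up-plus-non-branching scheme fails at this level of generality.

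The paper's route to $\m(\fix{g})=0$ (Lemma \ref{lm:02}, Corollary \ref{co:01}) needs no blow-up and no density point: if $A\subset\fix{g}$ had positive measure, take $\mu_0=\m(A)^{-1}\m|_A$ and the $g$-symmetric target $\mu_1=\tfrac12(\delta_x+\delta_{g(x)})$ for some $x\notin\fix{g}$; the unique optimal geodesic plan $\pi$ provided by Theorem \ref{th:Gi-Ra-St} splits into the geodesics ending at $x$ and those ending at $g(x)$, and pushing these forward by $\hat g$ and $\reallywidehat{g^{-1}}$ respectively produces a second optimal plan with the same marginals but different from $\pi$, contradicting uniqueness. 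If you replace your blow-up and promotion argument by this symmetrization (or simply invoke Lemma \ref{lm:02}), your proof closes; as written, the key step does not go through.
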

The $\mathsf{CD}$-, $\CD$-, and $\MCP$-conditions allow for non-Riemannian geometries which include, but are not restricted to, Finsler manifolds. Consistently with the remark following Theorem \ref{th:03}, the above corollary is still valid in spaces with these kind of metrics granted that the tangent cones are \emph{well-behaved}, see Remark \ref{rm:generalization}. For example, any corank $1$ Carnot group of dimension $(k + 1)$ equipped with a left-invariant measure is an essentially non-branching $\MCP$-space with unique non-Euclidean tangents by Rizzi \cite{Ri}, it follows that their automorphism groups are Lie groups. 

The above corollaries are particular examples of a larger class of spaces for which condition \conditiona holds. 

\begin{thm}\label{th:04}
Let $(X,\dis,\m)$ be a locally compact, length metric measure space. Assume that for all probability measures $\mu_0(\Lt\m),\mu_1\in\mathcal{P}_{2}(X)$ any optimal transport plan between $\mu_0$ and $\mu_1$ is induced by a map.
Then condition \conditiona is satisfied. In particular, if $X$ has $\m$-$a.e.$ unique Euclidean tangent cones $\G$ is a Lie group. 
\end{thm}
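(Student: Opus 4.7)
The second sentence of the theorem is immediate from Theorem~\ref{th:03} as soon as \conditiona is in hand, so the content is verifying that condition. My plan is to establish the strictly stronger statement that $\m(\fix{g})=0$ for every non-identity $g\in\G$; once that is known, \conditiona is automatic for any ball with, say, $\mathsf{FIX}:=\m(B_s(x))/2$.

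A preliminary remark is that under the stated hypothesis the optimal plan between $\mu_0\Lt\m$ and $\mu_1\in\p(X)$ is unique: the midpoint of two optimal plans is again optimal, hence induced by a map by hypothesis, but a convex combination of two graphs over $\mu_0$ is concentrated on a graph only when the underlying maps coincide $\mu_0$-almost everywhere. Using this uniqueness, the main argument proceeds by contradiction. Suppose $\mathbb{I}\neq g\in\G$ satisfies $\m(\fix{g})>0$; pick a bounded Borel set $K$ so that $\m(\fix{g}\cap K)>0$ and let $\mu_0:=\m|_{\fix{g}\cap K}/\m(\fix{g}\cap K)$, which is absolutely continuous. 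Since $g$ has a fixed point and $X$ is locally compact, $g$ lies in the stabilizer of that point, which is a compact subgroup of $\G$ by van Dantzig--van der Waerden; therefore $H:=\overline{\langle g\rangle}$ is a compact abelian group. Pick $y\notin\fix{g}$ and set $\mu_1:=\int_H\delta_{h(y)}\,dh$ with $dh$ the normalized Haar measure on $H$. Commutativity of $H$ gives $g(h(y))=h(g(y))\neq h(y)$ for every $h\in H$, so $\supp\mu_1\subset X\setminus\fix{g}$, and Haar invariance gives $g_*\mu_1=\mu_1$. Let $T$ be the optimal map from $\mu_0$ to $\mu_1$ furnished by the hypothesis. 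Because $g$ is the identity on $\supp\mu_0\subset\fix{g}$, the map $g\circ T$ also transports $\mu_0$ to $g_*\mu_1=\mu_1$ with the same cost as $T$; uniqueness then forces $T=g\circ T$ $\mu_0$-almost everywhere, so $T$ takes values in $\fix{g}$. This makes $\mu_1=T_*\mu_0$ supported in $\fix{g}$, contradicting the construction of $\mu_1$.

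The main obstacle is the construction of a $g$-invariant test measure $\mu_1$ when $g$ has infinite order: one cannot simply average over $\langle g\rangle$. The resolution is to exploit the contradiction assumption itself, namely $\m(\fix{g})>0$, which guarantees a fixed point for $g$, places $\langle g\rangle$ inside the compact isotropy of that point, and thus makes $\overline{\langle g\rangle}$ compact so that Haar averaging applies. A convenient feature of the argument is that the hypothesis only requires $\mu_0\Lt\m$ (never $\mu_1\Lt\m$), and the measure $\mu_1$ built above is intrinsic to the isometry; the same proof therefore applies uniformly to $\G=\iso$ and to $\G=\isom$ without having to track how an isometry interacts with the reference measure.
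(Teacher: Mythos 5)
Your proof is correct, but it reaches condition \conditiona{} by a genuinely different mechanism than the paper, which routes Theorem \ref{th:04} through Lemma \ref{lm:02}. Both arguments first reduce to the stronger claim $\m(\fix{g})=0$ for $g\neq\mathbb{I}$ and both extract uniqueness of the optimal plan from the map-hypothesis by the same convexity/averaging observation. The divergence is in how the contradiction with uniqueness is produced. The paper takes the singular two-point target $\mu_1=\frac{1}{2}(\delta_{x}+\delta_{f(x)})$, which is \emph{not} $f$-invariant, and instead manufactures a second, distinct optimal geodesic plan by pushing the geodesics ending at $x$ forward by $\hat f$ and those ending at $f(x)$ by $\reallywidehat{f^{-1}}$; the injectivity of $\e_0$ is what guarantees the new plan differs from the old one. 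You sidestep the non-invariance problem entirely by building a $g$-invariant target: since $\m(\fix{g})>0$ forces a fixed point, $\overline{\langle g\rangle}$ sits inside a point stabilizer, which is compact, so you can Haar-average the orbit of a non-fixed point; then $T\mapsto g\circ T$ preserves both marginals and cost, and uniqueness traps $T$ inside $\fix{g}$, which is incompatible with $\supp\mu_1\cap\fix{g}=\emptyset$. What your route buys is a cleaner argument phrased directly in terms of couplings and maps (matching the theorem's hypothesis verbatim, with no need to pass to geodesic plans or to the injectivity of $\e_0$), and a uniform treatment of $\iso$ and $\isom$. What it costs is two extra inputs: compactness of isotropy groups, which is part of van Dantzig--van der Waerden for connected locally compact spaces (connectedness holds here since $X$ is a length space) but is strictly more than the version quoted as Theorem \ref{th:Da-Wa}, and the existence of Haar measure on $\overline{\langle g\rangle}$. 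Note also that your $\mu_1$ is singular, so unlike the paper's argument it does not directly adapt to the weakened hypothesis of Remark \ref{rmk:weaken} where $\mu_1\Lt\m$ is required; one would have to mollify the orbit measure in a $g$-invariant way to recover that refinement.
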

Indeed, from the results of Gigli-Rajala-Sturm \cite{Gi-Ra-St}, and Cavalletti-Mondino \cite{Ca-Mo} it's known that transport plans starting from absolutely continuous measures in $\RCD$-, strong $\mathsf{CD}$/$\CD$-, and essentially non-branching $\MCP$-spaces  are given by maps. Moreover it was proved in Mondino-Naber\cite{Mo-Na} that $\rcd{K}{N}$-spaces have $\m$-$a.e.$ unique Euclidean tangents for finite $N$.

On the other hand we show that a weak curvature-dimension condition by itself might not be restrictive enough to guarantee smooth automorphism groups.
\begin{prop}\label{co:03}
There exists an $\mcp{2}{3}$-space for which neither $\isom$ nor $\iso$ are Lie groups.
\end{prop}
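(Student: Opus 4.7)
The plan is to exhibit an explicit metric measure space $(X,\dis,\m)$ that verifies the $\mcp{2}{3}$ condition and simultaneously carries enough ``excess symmetry'' to force $\iso$ and $\isom$ out of the Lie category. The natural route is to produce $X$ so that the hypothesis of Theorem \ref{th:03} (local compactness, balls equal to closures of open balls, and $\m$-a.e. Euclidean tangents) holds, and then violate condition \conditiona: find a sequence of non-identity elements $g_n\in\G$ with $\m(\fix{g_n}\cap B_s(x))\nearrow \m(B_s(x))$ for every candidate pair $(s,\mathsf{FIX})$. This immediately forces $\G$ to fail to be Lie by the ``only if'' direction of Theorem \ref{th:03}; should arranging Euclidean tangents be cumbersome, one can alternatively argue non-Lie structure directly by exhibiting a closed subgroup of $\G$ that is not locally Euclidean.

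For the construction I would use a spherical-suspension / warped-product procedure, exploiting the fact that these operations transport $\mcp{K'}{N'}$ conditions to $\mcp{K}{N}$ conditions with a shift in parameters. Concretely, one begins with a lower-dimensional $\mcp{K'}{N'}$ base $Y$ whose isometry group is artificially large — built for instance by gluing together congruent pieces along a common boundary in a way that yields a countable independent family of reflective symmetries, each fixing a positive-measure region. Then one takes the (two- or three-fold) spherical suspension or an appropriate warped product with $\sin$-type warping to reach dimension and curvature parameters consistent with $\mcp{2}{3}$. Each isometry of $Y$ extends trivially on the ``radial'' factor, and the extended isometries still fix positive-measure sets of the suspension, which can be made to exhaust $\m(B_s(x))$ in the limit.

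The main obstacle is the verification of $\mcp{2}{3}$ on the singular construction. MCP requires a quantitative distortion estimate for some coupling between $\delta_x$ and any absolutely continuous probability measure, and at the ``gluing locus'' or the suspension poles the geodesic branching has to be controlled explicitly. The cleanest way to handle this is to invoke stability results for MCP under metric cones, spherical suspensions, and measured Gromov--Hausdorff limits (due to Bacher--Sturm, Ketterer, and others), so that the verification reduces to MCP on the base $Y$ plus a computation of the warping coefficients along radial geodesics. A secondary subtlety is to make sure $Y$ itself can be chosen MCP with the desired exotic isometries, which rules out many naive candidates (bouquets of spheres, Hawaiian earrings, branching trees), and pushes one toward constructions where the ``extra'' symmetries act across a codimension-one set glued with matching angles.

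Once $X$ is in hand and MCP is verified, the final step is short. The explicit infinite family $\{g_n\}$ defeats condition \conditiona at every pair $(s,\mathsf{FIX})$ by design, whence Theorem \ref{th:03} yields that $\iso$ is not a Lie group; to rule out $\isom$ one checks that each $g_n$ preserves $\m$ (automatic when $g_n$ arises from a measure-symmetric gluing), so the same argument produces a violation of \conditiona for $\isom$. If instead one prefers to bypass Theorem \ref{th:03}, the group generated by $\{g_n\}$ is seen to contain a copy of an infinite compact abelian group of the form $(\mathbb{Z}/2)^{\mathbb{N}}$ or $\mathbb{T}^{\mathbb{N}}$ inside $\G$, which is incompatible with any locally Euclidean structure.
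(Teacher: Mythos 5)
There is a genuine gap: your text is a strategy outline rather than a proof, and the two steps that constitute essentially all of the content of this proposition --- the explicit construction of the space and the verification of $\mcp{2}{3}$ --- are both deferred. The spherical-suspension/warped-product route does not resolve anything, because a suspension only inherits the symmetries of its base: to get a countable independent family of reflections each fixing a set of positive measure, the base $Y$ must already carry such a family, so the entire difficulty (building an $\MCP$ space with infinitely many independent positive-measure-fixing involutions) is pushed into the unspecified object $Y$. Invoking stability of $\MCP$ under cones and suspensions cannot substitute for that construction. Moreover, your primary route through Theorem \ref{th:03} imposes the additional burden of arranging $\m$-a.e.\ Euclidean tangent cones, which is both unnecessary and, for the kind of glued/branching examples one is forced into here, typically false (the paper's own example has $\ell^\infty$-planes as tangents inside the two-dimensional pieces). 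Your fallback --- exhibiting a closed copy of $(\mathbb{Z}/2)^{\mathbb{N}}$ in $\G$ --- is the right mechanism, but again it presupposes the space.

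For comparison, the paper's proof constructs a \emph{fancy necklace}: the segment $[0,\pi/2]$ with the model density $\cos^2(x)$, in which countably many disjoint subintervals are replaced by thin ``diamonds'' $D_k\subset\RR^2$ carrying the $L^\infty$ metric and a density chosen so that the transport of mass through each diamond can be made insensitive to the vertical coordinate. Each diamond admits the reflection $y\mapsto -y$ independently of the others, so $\iso=\isom=\Pi^\infty\{\pm 1\}$, which is compact, totally disconnected and non-discrete, hence not a Lie group --- no appeal to Theorem \ref{th:03} is needed. The $\mcp{2}{3}$ verification is done by hand: one checks the finite $n$-necklaces (a projection argument reduces $n$ to at most $2$ diamonds, the cases $n=0,1$ being in Sturm and Ketterer--Rajala, and $n=2$ by an explicit density estimate along piecewise-linear geodesics), and then passes to the limit using stability of $\MCP$ under measured Gromov--Hausdorff convergence. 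That careful quantitative control of the distortion across the branching loci is precisely what your proposal acknowledges as ``the main obstacle'' but does not carry out.
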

We describe now the idea of the proof of the main theorem. A remarkable result of Gleason 
 and Yamabe 
in the early 1950's  asserts that a locally compact, topological group is not a Lie group if and only if every neighborhood of the identity has a non-trivial subgroup.\footnote{See Theorem \ref{th:Gl-Ya} and the remarks below.} If a group has this property we say that it has the small subgroup property, $\ssp$, for short. The strategy is to show the contrapositive statement in Theorem \ref{th:03}. Supposing that $\iso$ is not a Lie group, by using a blow up argument, we show that the assumption of the $\m
$-$a.e.$ infinitesimal Euclideanity and the $\ssp$ imply the existence of non-trivial isometries with arbitrarily big measure of their fixed point set. Moreover, we can verify that isometries with this property generate small subgroups. This is shown in Propositions \ref{pr:01} and \ref{pr:bigfix implies ssp} which contain most of the work needed for the proof. Subsequently we show that the existence of a single non-trivial isometry with a fixed point set of positive measure implicates that optimal plans are not unique, thus we can conclude Theorem \ref{th:04}. 
The use of blow-up arguments is common in the proofs done for Alexandrov and Ricci limit spaces. However the delicate point is to guarantee, relying simply on the tools at hand, a non-trivial convergence of subgroups of isometries acting on sequences of scaled spaces. For Alexandrov spaces one uses the fact that geodesics do not branch, whereas in the case of Ricci limit spaces, a crucial step depends on the  connectedness properties of the regular set. There exist examples in the setting of Theorem \ref{th:03} where these properties simply do not hold. Therefore we must give new arguments; we make use of optimal transport tools and measure properties of the regular set.



In the next section we give definitions and previous results that will be used. In Section \ref{se:mms} we find Propositions \ref{pr:01} and \ref{pr:bigfix implies ssp}. The rest of the work needed to conclude the proofs, and the proofs themselves, of Theorem \ref{th:03} and \ref{th:04}, are in Section \ref{se:pr}. At the end of the manuscript we present an example of an $\MCP$-space where $\isom$ is not a Lie group.
\section*{Acknowledgements}
The author would like to express his gratitude to J{\"u}rgen Jost for his crucial support and valuable advice. He is also very grateful to Rostislav Matveev, and Jim Portegies for their essential and friendly involvement in this work. \\ He also thanks Fernando Galaz-Garcia, Yu Kitabeppu, Martin Kell, and Tapio Rajala for, among other things, discussions that led to Remark \ref{re:01}, and Fabio Cavalletti and Andrea Mondino for bringing \cite{Ca-Mo} to his attention. Lastly, the author is thankful to Nidhi Kaihnsa, Niccol\`o Pederzani, and Ruijun Wu for commenting on earlier versions of the manuscript. This work was supported by the IMPRS program of the Max Planck Institute for Mathematics in the Sciences and partially by CONACYT. 
\section{Preliminaries}\label{se:ba}
We set notation and compile definitions and results used in the paper. The text is mainly self-contained, however, we provide references for some more elaborate definitions to maintain brevity.
\subsection{Metric measure spaces}
A metric measure space, $(X,\dis,\m)$, is a triple where
\begin{itemize}
\item[]$(X,\dis)$ is  a $complete, \; separable\; metric\; space$ and,
\item[] $\m \neq 0$ is a \textit{non-negative Borel measure finite on every bounded set}. 
\end{itemize}
We write $\mms$ for short. A pointed metric measure space, $(X,\dis,\m,x)$, is a $\mms$ together with a base point $x\in X$. In the text a geodesics is a map, $\gamma : [0,1] \to X$, such that:
\begin{equation*}
\dis(\gamma_r,\gamma_s) = (r-s)\dis(\gamma_0,\gamma_1) \qquad \text{for all } 0\leq s \leq r \leq 1
\end{equation*}
where $\gamma_t:=\gamma(t)$. We write $\geo(X)$ for the space of all geodesics on $X$ endowed with topology of uniform convergence. A metric space is called a \textit{geodesic space} if for every given pair of points $x,y\in (X,\dis)$ there exists a geodesic that joins $x$ and $y$.  For $t\in [0,1]$ define the \textit{evaluation map}, $\e_t:\geo(X) \to X$, as $\e_t(\gamma):=\gamma_t$ for $\gamma\in\geo(X)$. The \textit{restriction map}, $\rest{s}{t}:\geo(X) \to \geo(X)$, is defined as $\rest{s}{t}:= \gamma \circ f_s^t$ for $s,t\in[0,1], \gamma \in \geo(X)$ and the real function $f_s^t(x):=(t-s)x+s$.

We deal with group actions on sequences of $\mathsf{pmms}$,  and the pointed Gromov-Hausdorff ($\pGH$) and pointed equivariant Gromov-Hausdorff convergence ($\peGH$) provide canonical types of convergence in this framework. For definitions we refer to \cite{Bu-Bu-Iv,Fu,Fu-Ya-92}.
 
Any $\pGH$-limit of a sequence of scaled spaces, $(X,\frac{1}{r_i}\dis,x)\overset{\pGH}{\to}(X_x,\dis_{\infty},x_{\infty})$ for $r_i \to 0$, is called a (metric) \text{tangent cone of $X$ at $x$}. We denote the set of all tangent cones of $X$ at $x$ by $\mathsf{Tan}(X,x):=\{(X_\infty,\dis_{\infty},x_{\infty}) \text{ is a } \pGH\text{-limit as above}\}$. The existence or uniqueness of tangent cones is not guaranteed in a general setting. However, the set of points of $X$ with unique Euclidean tangent cones is called the $regular$ $set$ of $X$, written as $\R$. In detail,
\begin{align*}
\R  :=  \left \{ x \in X \, \| \, \exists k=k(x) \in \mathbb{N} \text{ such that } \mathsf{Tan}(X,x)= \{(\mathbb{R}^k,d_E,\overline{0})\}\right \}. 
\end{align*}
We say that $(X,\dis,\m)$ has \textit{$\m$-$a.e.$ Euclidean tangents} if the set of numbers $\{k(x) \in \mathbb{N} \,\| \, x \in \R,\, \mathsf{Tan}(X,x)=\{(\mathbb{R}^{k(x)},d_E,\overline{0})\}\}$ is finite and if $\m(X\setminus \R)=0$. For a fixed $\epsilon> 0 $, the $\epsilon$-$regular$ $set$, $\R_\epsilon$, is the set $\R_\epsilon:= \cup_\delta (\R)_{\epsilon, \delta}$, where for a given $\delta>0$ the set $(\R)_{\epsilon,\delta}$ is defined as all points $x\in X$ for which there exists a $k=k(x)$ such that
\begin{equation*}
\dis_\mathsf{GH} \left (B_r(x),B_r^k(\overline{0})\right ) < \epsilon \; r \qquad \text{for all } r< \delta.
\end{equation*}
Above $\dis_{\mathsf{GH}}$ is the Gromov-Hausdorff distance and $B_r^k(\overline{0}) \subset \mathbb{R}^k$ is the ball of radius $r$ around $\overline{0} \in \mathbb{R}^k$. Note that $\R = \cap_{\epsilon }\R_\epsilon$ and that for every $\epsilon >0$ the measure $\m(X\setminus \R_\epsilon)$=0 if $X$ has $\m$-$a.e.$ Euclidean tangents.

Two $\mms$ $(X_1,\dis_1,\m_1)$, $(X_2,\dis_2,\m_2)$ are isomorphic if there exists an isometry 
\begin{align}\label{def:iso}
\begin{split}
&f:\supp(\m_1) \to X_2 \qquad \mathrm{such\; that }\\
&(f)_{\#}\m_1=\m_2.
\end{split}
\end{align}
We use the word $isometry$ to make reference to usual metric isometries. In contrast, we refer to maps satisfying (\ref{def:iso}) as  \textit{measure-preserving isometries}.  Particularly, we note that an isometry is defined on the whole space $X_1$ and does not necessarily satisfy (\ref{def:iso}). By definition $(X,\dis,\m)$ is always isomorphic to $(\supp(\m),\dis,\m)$. This induces a canonical equivalence class of isometric metric measure spaces where \textbf{only the support of the measure is relevant}. \rm We assume that $\supp(\m)=X$, which is a natural restriction in the class of isomorphisms of $\mms$. We endow the groups $\iso$ and $\isom$ with the compact-open topology making them topological groups, see \cite{Ko-No} pp.46. We write in the remainder $\G\in\{\iso,\isom\}$ to denote one of these two groups. 
\begin{rmk}[Topology on $\isom$]\label{re:01}
We explain and motivate our choice of topology on $\isom$. For locally compact metric spaces it's natural to endow  $\iso$ with the compact-open topology since the structure under study is of pure metric nature. In addition, in this context, the rigidity of the isometries assures that pointwise convergence implies convergence w.r.t. the compact-open topology.\footnote{Rigorously we would have to justify the use of sequences to compare topologies. This can  be done because $\iso$ is second-countable which can be concluded from the fact that $X$ is a locally compact metric space. Consult for instance \cite{Ko-No} pp.46.}\label{footnote} 
Alternatively, on $\mms$ there is additional structure of interest, namely, the measure structure. However, as we explain below, the rigidity of the measure-preserving isometries guarantee that a reasonable choice of topology on $\isom$ coincides with the compact-open topology.

We first observe that topology that only considers the measure structure is too coarse for our purposes because it doesn't see metric properties. A logical way to proceed would be to couple a measure-wise and a metric-wise topology. However, the weakest metric convergence, the pointwise convergence, coincides with the compact-open convergence. On the other hand, in Lemma \ref{cl:01} we show that the compact-open convergence of a sequence of measure-preserving isometries, $(f_n)$, implies the weak convergence of the pushforward measures $(f_n)_\#(\m)$ in a locally compact $\mms$. 
\end{rmk}	
\subsection{Lie Groups}
Denote by $G_0$ the identity component of $G$, that is, the largest connected set containing the identity element $\mathbb{I}$. As definition we say that $G$ is a Lie group if and only if $G / G_0$ is discrete\footnote{For a comment on the cardinality of $G/G_0$ see Remark \ref{rm:car}} and the identity component $G_0$ is  a Lie group in the usual smooth sense. We know by Remark \ref{rm:car} that $G$ looks, in the worst cases, as countable copies of a smooth Lie group which do not accumulate. 
 \begin{thm}[van Dantzig and van der Waerden (1928) \cite{Da-Wa}]\label{th:Da-Wa}
Let $(X,d)$ be a connected, locally compact metric space. Then $\iso$ is locally compact with respect to the compact-open topology. Furthermore if $X$ is compact, then $\iso$ is compact.
\end{thm}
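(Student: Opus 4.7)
My plan is to apply the Arzel\`a--Ascoli theorem to the space $C(X,X)$ with the compact-open topology. The equicontinuity input comes for free: every isometry is $1$-Lipschitz, so $\iso$ is equicontinuous as a family of maps $X\to X$. Both assertions then reduce to establishing pointwise relative compactness of the appropriate subset of $\iso$ and to checking that $\iso$ is closed in $C(X,X)$.

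\textbf{Compact case.} Pointwise relative compactness is automatic, since every orbit $\{f(y):f\in\iso\}$ lies in the compact set $X$. Arzel\`a--Ascoli then gives that $\iso$ is relatively compact in $C(X,X)$. For closedness, any compact-open limit $f$ of isometries $f_n$ is distance-preserving, a condition passing to the limit pointwise; and in the compact setting surjectivity also survives, since for each $z\in X$ a convergent subsequence of $f_n^{-1}(z)$ produces a preimage of $z$ under $f$.

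\textbf{Locally compact, connected case.} Fix $x_0\in X$ and a relatively compact open neighborhood $U$ of $x_0$. The open neighborhood of the identity
\[
V\;:=\;\{f\in\iso : f(x_0)\in U\}
\]
will be the candidate: I plan to show $\overline{V}$ is compact. By Arzel\`a--Ascoli (with equicontinuity again from $1$-Lipschitzness) it suffices to show that for every $y\in X$ the orbit $\mathcal{O}_y:=\{f(y):f\in\overline{V}\}$ is relatively compact in $X$. I will do this by a classical connectedness argument: set
\[
A\;:=\;\{y\in X : \overline{\mathcal{O}_y}\text{ is compact}\},
\]
observe that $x_0\in A$ (since $\mathcal{O}_{x_0}\subset\overline{U}$), and show $A$ is clopen in $X$. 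The auxiliary fact I will use is that for any compact $K\subset X$ there exists $\rho>0$ such that $\{z:d(z,K)\leq\rho\}$ sits inside a compact set; this follows from local compactness by covering $K$ with finitely many precompact open balls of uniform radius. Given $y_0\in A$, I will pick a compact neighborhood $W$ of $y_0$ with $\mathrm{diam}(W)$ below the corresponding $\rho$ for $K=\overline{\mathcal{O}_{y_0}}$; then for $y\in W$ and $f\in\overline{V}$ one has $f(y)\in\{z:d(z,\overline{\mathcal{O}_{y_0}})\leq\mathrm{diam}(W)\}$, a relatively compact set. Closedness of $A$ follows by the parallel estimate at a convergent sequence $y_n\to y_\infty$. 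Connectedness of $X$ then forces $A=X$, and relative compactness of $\overline{V}$ in $C(X,X)$ follows, with closedness of $\iso$ inside $C(X,X)$ handled exactly as in the compact case.

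\textbf{Main obstacle.} The most delicate step is propagating compactness of orbits from the single basepoint $x_0$ to arbitrary $y\in X$ through the clopen-ness of $A$. This is where connectedness genuinely enters, and it is also where the non-properness of a general locally compact metric space bites: closed balls and $\rho$-thickenings of compact sets are only guaranteed to be relatively compact for sufficiently small $\rho$, so the uniform choice of radius coming from the covering argument above is what makes the openness/closedness of $A$ go through.
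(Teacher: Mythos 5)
First, a contextual point: the paper does not prove this statement --- it is quoted as a classical theorem of van Dantzig and van der Waerden and used as a black box (via \cite{Da-Wa}, see also \cite{Ko-No}) --- so there is no in-paper argument to compare yours against. Judged on its own, your proof follows the standard Ascoli-plus-connectedness route, and the compact case together with the skeleton of the locally compact case --- equicontinuity from $1$-Lipschitzness, the uniform-thickening lemma for compact sets, and the clopen set $A$ of points with precompact orbit, anchored at $x_0\in A$ --- is correct.

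The one genuine gap is the final clause, ``closedness of $\iso$ inside $C(X,X)$ handled exactly as in the compact case.'' In the compact case you used that $f_n^{-1}(z)$ automatically has a convergent subsequence because $X$ is compact. In the merely locally compact case this fails: the set $\{f_n^{-1}(z)\}$ is bounded, since $d(f_n^{-1}(z),x_0)=d(z,f_n(x_0))\le d(z,x_0)+\sup_{u\in\overline U}d(x_0,u)$, but bounded subsets of a locally compact, non-proper metric space need not be precompact (your own ``main obstacle'' paragraph flags exactly this phenomenon for thickenings), so surjectivity of the limit $f$ does not come for free and $\iso$ being closed in the space of distance-preserving maps is precisely the point at issue. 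The repair uses machinery you already built, applied to the inverse family: if $f_n\to f$ with $f_n\in V$, then $d\bigl(f_n^{-1}(f(x_0)),x_0\bigr)=d\bigl(f(x_0),f_n(x_0)\bigr)\to 0$, so $\{f_n^{-1}\}$ has a precompact (indeed convergent) orbit at the single point $y_0:=f(x_0)$. Running your clopen argument for the family $\{f_n^{-1}\}$ anchored at $y_0$ gives precompact orbits everywhere; Arzel\`a--Ascoli extracts $f_{n_k}^{-1}\to g$ uniformly on compacta with $g$ distance-preserving; and passing to the limit in $f_{n_k}\circ f_{n_k}^{-1}=\mathrm{id}=f_{n_k}^{-1}\circ f_{n_k}$ yields $f\circ g=g\circ f=\mathrm{id}$, so $f$ is a surjective isometry. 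With that inserted, $\overline V$ is a compact neighborhood of the identity contained in $\iso$, and the argument is complete.
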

A topological group has $G$ has the $no \; small\;subgroup$ $property$ if there exists a neighborhood of the identity with no non-trivial subgroup. In this case we write $\mathsf{nssp}$ for short. Below we cite an outstanding result that characterizes Lie groups in terms of the $\mathsf{nssp}$. 
\begin{thm}[Gleason (1952) \cite{Gl}, Yamabe (1953) \cite{Ya53}]\label{th:Gl-Ya}
Let $G$ be a locally compact, topological group. Then $G$ is a Lie group if and only if it has the no small subgroups property. 
\end{thm}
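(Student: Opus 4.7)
The plan is to prove both directions, noting that the forward direction (Lie implies $\mathsf{nssp}$) is elementary while the converse is the deep content of the theorem, corresponding to the bulk of the solution to Hilbert's fifth problem.

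For the forward direction I would assume $G$ is a Lie group with Lie algebra $\mathfrak{g}$. The exponential map $\exp : \mathfrak{g} \to G$ is a local diffeomorphism at $0$, so one can choose a convex, precompact, balanced neighborhood $V$ of $0$ on which $\exp$ is injective, and set $U := \exp(V)$. If $H \leq G$ is a subgroup contained in $U$ and $h = \exp(v) \in H$ with $v \neq 0$, then every power $h^k = \exp(kv)$ must lie in $U$; but for some minimal $k$ the vector $kv$ exits $V$, and by injectivity of $\exp$ on a slightly larger neighborhood one concludes $h^k \notin U$, a contradiction. Hence $U$ contains only the trivial subgroup, establishing the $\mathsf{nssp}$.

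For the converse I would follow the Gleason--Yamabe program. The central construction, due to Gleason, is a ``length function'' $\ell$ on a neighborhood of $\mathbb{I}$ that measures how many times an element $g$ may be iterated before leaving a fixed small neighborhood. The $\mathsf{nssp}$ forces $\ell(g) < \infty$ for $g \neq \mathbb{I}$, and careful commutator estimates (the so-called Gleason lemmas) show that $\ell$ behaves like a seminorm compatible with an adapted left-invariant metric. From $\ell$ one extracts, for each $g$ near $\mathbb{I}$, a unique continuous one-parameter subgroup $\varphi_g : \mathbb{R} \to G$ whose image contains $g$; the assignment $g \mapsto \varphi_g'(0)$ then furnishes a local exponential chart and equips a neighborhood of $\mathbb{I}$ with the structure of a local Lie group.

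The final step is to pass from a local to a global Lie structure via Yamabe's approximation theorem: every connected locally compact group admits arbitrarily small compact normal subgroups $K$ such that $G/K$ is a Lie group. Under $\mathsf{nssp}$, such a $K$ lying in a sufficiently small neighborhood of $\mathbb{I}$ must equal $\{\mathbb{I}\}$, whence $G$ itself is Lie; the non-connected case is reduced to this by applying the argument to the identity component $G_0$, which is open in any locally compact group carrying the $\mathsf{nssp}$ (since the Yamabe quotients are already Lie and hence locally connected). The main obstacle is unquestionably Gleason's length-function estimate, in which one must control commutator iteration to force a near-linear behavior of products; this is precisely where $\mathsf{nssp}$ enters essentially, since otherwise the infinite descent of small subgroups would destroy any hope of a well-defined local exponential coordinate and of the compact normal Lie approximants produced by Yamabe.
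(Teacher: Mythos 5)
The paper does not actually prove this statement: it is the Gleason--Yamabe theorem, cited from \cite{Gl}, \cite{Ya53} (see also \cite{Tao}), and the only argument the paper supplies is the short remark reducing the non-connected case to the connected one via the structure-theorem formulation (an open subgroup $G'$ admitting arbitrarily small compact normal $K$ with $G'/K$ Lie, so that $\mathsf{nssp}$ forces $K=\{\mathbb{I}\}$ and discreteness of $G/G_0$ follows from that of $G/G'$). Your forward direction (Lie $\Rightarrow$ $\mathsf{nssp}$ via injectivity of $\exp$ on a neighborhood $2V$ and the minimal $k$ with $kv\notin V$) is complete and correct, and your treatment of the non-connected case is essentially the same reduction the paper records in its remark.

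For the converse, however, what you have written is a roadmap rather than a proof: the entire analytic content --- Gleason's commutator estimates showing the length function behaves like a seminorm, the extraction of one-parameter subgroups and the local exponential chart, and Yamabe's approximation theorem producing the compact normal Lie approximants --- is invoked by name but not established. These are precisely the deep steps constituting the solution to Hilbert's fifth problem, each requiring substantial work. This is not a defect relative to the paper, which also treats the theorem as a black box, but you should be aware that as a self-contained argument your proposal proves only the easy direction; for the hard direction it correctly identifies the architecture of the known proof without carrying out any of its essential estimates.
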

\begin{rmk}
In \cite{Ya53} Yamabe generalizes Gleason's theorem to the infinite dimensional case, however, $G$ is assumed to be connected. We present an argument, due to an undisclosed Russian mathematician, which shows that we can  consider non-connected groups. 

An equivalent way of stating Theorem \ref{th:Gl-Ya} is: Assuming  the same hypothesis, then there exists an open subgroup $G'< G$ such that for every neighborhood of the identity $ U\subset G$ there exists a normal subgroup $(U\supset )K\unlhd G$ that makes $G'/K$ a Lie Group \cite{Tao}. If $G$ has the $\mathsf{nssp}$ the only small normal subgroup is $\mathbb{I}$ itself, thus making $G'$ a Lie group which by definition means that $G'/G_0$ is discrete. However, this implies that $G/G_0$ is discrete since $G/G'$ is also discrete and $G/G'=(G/G_0) \big / (G' /G_0) $. 

\end{rmk}
\begin{rmk}\label{rm:car}
We make another observation regarding the cardinality of $G / G_0$. In principle the group of components might be uncountable but  fortunately, we can also discard this behavior. Assume that $G$ is a second-countable Lie group. By definition $G / G_0$ is discrete which is equivalent to $G_0$ being open. In turn, this implies that the quotient map is open and it follows that $G / G_0$ is second-countable since by assumption $G$ is second-countable. A second-countable space is separable, and discrete separable spaces are countable.  
Finally, we recall that $\iso$ is second-countable for a locally compact, connected metric space.
\end{rmk}
\subsection{Curvature dimension conditions}\label{se:rcd}
Curvature-dimension conditions require certain convexity behavior of an entropy functional defined on the space of probability measures of an $\mms$. Different choices of entropy functional and different types of convexity conditions give rise to alternative versions of curvature-dimension conditions. However, all conditions are compatible with lower Ricci bounds in the smooth framework and are stable under pointed measured-Gromov-Hausdorff convergence. Optimal transport theory provides an appropriate framework to define these type of conditions.  

We've decided not to include the definitions of the $\RCD$- or the $\CD$-conditions since we don't use them explicitly and are rather technical. Alternatively, we present the results that show that these spaces satisfy the assumptions of Theorem \ref{th:04}. Nevertheless, let's do recall that the $\RCD$-condition condition was introduced in  \cite{Am-Gi-Sa} and further developed in \cite{Am-Gi-Mo-Ra} and \cite{Er-Ku-St} to which the reader is referred to for a comprehensive discussion on the subject. The $\RCD$-condition itself couples a curvature-dimension condition with an infinitesimal Riemannian behavior known as infinitesimally Hilbertianity. 

Let $\mathcal{P}(X)$ be the space of probability measures on $(X,d)$ and $\p (X) \subset \mathcal{P}$ the subspace of measures with finite second moments. For $\mu_0,\mu_1 \in \p(X)$ the Wasserstein squared distance is defined as 
\begin{equation}\label{eq:w-d}
W_2^2(\mu_0,\mu_1):= \inf_\sigma \int_{X\times X} \dis (x,y)^2 \mathrm{d}\sigma (x,y).
\end{equation} 
The infimum taken over all measures $\sigma \in \mathcal{P}(X\times X)$ with first and second marginals equal to $\mu_0$ and $\mu_1$ respectively. 
If there exists a measurable function $G:X \to X $ such that the measure $\sigma=(\mathbb{Id},G)_\#\mu_0$ is a minimum we call $\sigma$ an \textit{optimal map}. 
Given $\mu_0,\mu_1 \in \p(X)$ the set $\optgeo{\mu_0}{\mu_1} \subset \mathcal{P}(\geo(X))$ is defined as the set of all measures $\pi$ such that the pushforward $(\e_0,\e_1)_{\#}\pi \in \mathcal{P}(X\times X) $ realizes the minimum in (\ref{eq:w-d}). A measure $\pi\in\optgeo{\mu_0}{\mu_1}$ is called an \textit{optimal geodesic plan} and if such a measure $\pi$ is the lift of an optimal map we call it an \textit{optimal geodesic map}. Note that the existence of optimal maps is rare however, the next theorem shows their existence in $\mms$ that satisfy a curvature-dimension condition and that do not branch too much. Recall that the essentially non-branching condition says that geodesics do not branch too often, we refer to the cited articles for a precise definition.    

\begin{thm}[Existence of optimal maps. Cavalleti-Gigli-Mondino-Rajala-Sturm \cite{Gi-Ra-St,Ca-Mo}]\label{th:Gi-Ra-St}
Let $K \in \mathbb{R}$, $N\in [1,\infty)$ and $(X,\dis,\m)$ be an essentially non-branching $\mcp{K}{N}$ space. Then, for every $\mu_0(\Lt \m),\mu_1 \in \p(X)$, there exist a unique optimal geodesic plan $\pi \in \optgeo{\mu_0}{\mu_1}$. Furthermore, such $\pi$ is given by a map. In particular, there exists $\Gamma \subset \geo(X)$ with $\pi (\Gamma)=1$ such that the map $e_t: \Gamma \to X$ is injective for all $t \in [0,1)$. 
\end{thm}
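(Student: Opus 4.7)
The plan is to follow the strategy that goes back to McCann's approach on Riemannian manifolds, adapted to the metric setting by Gigli--Rajala--Sturm and extended by Cavalletti--Mondino. Existence of \emph{some} optimal plan $\pi \in \optgeo{\mu_0}{\mu_1}$ is classical: $\p(X)$ with $W_2$ is complete because $(X,\dis)$ is, the cost $(x,y)\mapsto \dis(x,y)^2$ is continuous, and lifts of optimal couplings to measures on $\geo(X)$ exist because $X$ is a length space with the geodesic selection property. So I would take the existence of $\pi$ for granted and focus entirely on showing that any such $\pi$ is induced by a map, from which uniqueness follows by a convex combination argument: if $\pi_1,\pi_2\in\optgeo{\mu_0}{\mu_1}$, then $\tfrac12(\pi_1+\pi_2)$ is also an optimal plan, hence induced by a map, and this forces $\pi_1=\pi_2$.

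The core of the argument is to use $\mcp{K}{N}$ to propagate absolute continuity along the interpolation. Concretely, I would first show that $\mu_t:=(\e_t)_\#\pi\ll\m$ for every $t\in[0,1)$, with a quantitative bound on the density depending on $K,N,t$ and the density of $\mu_0$. The $\mcp{K}{N}$ condition provides exactly the distortion/Jacobian estimate needed: it controls how the measure of a set is related to the measure of its image under the contraction to any given point, and by disintegrating $\pi$ along its endpoints and iterating the contraction inequality one extracts an $L^\infty$ bound on the density of $\mu_t$ in terms of that of $\mu_0$, away from $t=1$.

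Once $\mu_t \ll \m$ for $t\in[0,1)$, I would invoke the essentially non-branching hypothesis: the set of geodesics that branch is $\pi$-negligible, because branching geodesics, restricted to any interior interval, would carry positive mass of $\mu_t$ on a set where two otherwise-distinct interpolations meet, contradicting the density bound together with the standard fact that optimal plans are concentrated on cyclically monotone subsets of $X\times X$. This would let me conclude that for $\pi$-a.e.\ $\gamma$, the full geodesic is determined by any of its interior points. From this, a measurable selection / disintegration argument produces a Borel set $\Gamma\subset\geo(X)$ with $\pi(\Gamma)=1$ on which the evaluation $\e_t$ is injective for each $t\in[0,1)$, and a Borel map $T:\supp\mu_0\to\geo(X)$ with $T(x)_0=x$ and $\pi=T_\#\mu_0$.

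The step I expect to be the main obstacle is the propagation of absolute continuity, step two above, because $\mcp{K}{N}$ is a one-point contraction condition rather than a convexity statement on entropy, so the estimate on $\mu_t$ is not automatic: one must carefully disintegrate $\pi$ with respect to the second marginal, apply $\mcp{K}{N}$ along each fiber, and reassemble the bounds; this disintegration is itself nontrivial because the fibers live in $\geo(X)$ and one needs a measurable family of densities. The injectivity of $\e_t$ on $\Gamma$ ultimately hinges on this density bound being finite, not merely on non-branching in the topological sense, which is why the two assumptions ($\MCP$ plus essentially non-branching) must be used in tandem rather than separately.
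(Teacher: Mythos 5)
This theorem is not proved in the paper: it is quoted as an external result from \cite{Gi-Ra-St,Ca-Mo}, so there is no in-paper argument to compare yours against. On its own terms, your outline reproduces the correct architecture of the proofs in those references --- existence of some plan is soft, uniqueness reduces by the half-sum trick to showing that \emph{every} optimal geodesic plan is induced by a map, and the crux is proving $(\e_t)_\#\pi\ll\m$ for $t\in[0,1)$ --- and you rightly flag that crux as the main obstacle. But that step is essentially the entire content of \cite{Ca-Mo}, and the mechanism you propose for it would fail as described. Disintegrating $\pi$ with respect to $\e_1$ produces conditional plans whose initial marginals are arbitrary measures, whereas the $\MCP$ inequality applies only to transports between a Dirac mass and a \emph{normalized restriction} $\m(A)^{-1}\m|_A$; and even granting a fiberwise estimate, the fibers overlap in $X$ at intermediate times, so fiberwise bounds do not reassemble into a bound on the density of $\mu_t$. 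The argument in \cite{Ca-Mo} instead proceeds by contradiction on a hypothetical singular part of the interpolant, using essential non-branching together with $\MCP$ to manufacture a contradiction with the local finiteness of $\m$, rather than by iterating the contraction inequality. So the proposal identifies the right skeleton but leaves the load-bearing step unestablished, and the route sketched for it is not viable without substantial modification.

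Two smaller points. First, ``the set of geodesics that branch is $\pi$-negligible'' is the \emph{definition} of essential non-branching (a hypothesis on the class of optimal geodesic plans), not a consequence of the density bound; what you actually need for injectivity of $\e_t$ at interior times is the stronger non-crossing statement that two geodesics charged by $\pi$ cannot meet at time $t\in(0,1)$, which follows from cyclical monotonicity of $(\e_0,\e_1)_\#\pi$ (the swapped concatenations would again be geodesics of an optimal plan) combined with non-branching. Second, the half-sum uniqueness argument requires the map property for \emph{every} optimal plan, not merely the existence of one good plan; your write-up is consistent with this, but it deserves explicit emphasis, since establishing the property for a single selected plan is genuinely easier and would not suffice.
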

We recall that essentially non-branching $\MCP$ space include: $\RCD$-spaces, essentially non-branching $\CD$-spaces, and essentially non-branching $\mathsf{CD}$-spaces.

The existence of $\m\- a.e.$ Euclidean tangents in $\RCD$-spaces was proved in \cite{Gi-Mo-Ra}, and later the uniqueness was shown by Mondino and Naber as a byproduct of the rectifiability of $\rcd{K}{N}$-spaces as metric spaces (compare with \cite{Ke-Mo} for rectifiability as $\mms$). A weaker version of their result is enough for us.
\begin{thm}[$\m$-$a.e.$ Euclidean tangents in $\rcd{K}{N}$-spaces. Mondino-Naber (2014)  \cite{Mo-Na}]\label{th:Mo-Na}
Let $K,N \in \mathbb{R}$, $N\geq 1$ and $(X,\dis,\m)$ be an $\rcd{K}{N}$ space. Then $ X$ has $\m$-$a.e.$ Euclidean (metric) tangents, i.e. $\m(X\setminus \R)=0$.
\end{thm}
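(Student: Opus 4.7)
The plan is to follow the strategy of Mondino and Naber and show that at $\m$-a.e. $x \in X$ one can construct, at all sufficiently small scales, $\delta$-splitting maps of a maximal rank $k(x)$. Via an almost-rigidity theorem this will force every tangent cone at $x$ to be isometric to $\mathbb{R}^{k(x)}$. The rigid input, used throughout, is Gigli's splitting theorem for $\rcd{0}{N}$-spaces: any $\rcd{0}{N}$-space containing a line splits isometrically as $\mathbb{R}\times Y$ with $Y$ an $\rcd{0}{N-1}$-space. Since tangent cones of $\rcd{K}{N}$-spaces are $\rcd{0}{N}$-spaces (scaling sends $K$ to $0$), Euclidean factors can be detected by producing almost-lines in blow-ups.

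First I would introduce $\delta$-splitting maps $u=(u_1,\dots,u_k)\colon B_r(x)\to\mathbb{R}^k$, requiring each $u_j$ to be approximately harmonic in an $L^2$ sense and the gradients to satisfy $\fint_{B_r(x)}|\langle \nabla u_i,\nabla u_j\rangle-\delta_{ij}|\,\dis\m<\delta$. These maps are constructed by applying the heat semigroup $h_t$ of the canonical Dirichlet form to distance functions $\dis(\cdot,p_j)$, where the points $p_j$ are chosen so that $\dis(x,p_j)\gg r$ and so that the geodesics through $x$ to $p_j$ and $p_j'$ become nearly straight lines in the blow-up. The Bakry-Émery contractivity and the Bochner inequality available in $\rcd{K}{N}$-spaces, together with Gigli's second-order calculus and Savaré's self-improvement of the weak Bochner inequality, give the quantitative control over $|\nabla h_t u|$ and its Hessian needed to verify the orthonormality on balls with a definite density of their mass.

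Next I would prove the quantitative splitting: if $u$ is a $\delta$-splitting map on $B_r(x)$ then $(X,r^{-1}\dis,x)$ is $\Phi(\delta|K,N)$-close in pointed Gromov-Hausdorff distance to a product $\mathbb{R}^k\times Y_x$, with $\Phi(\delta|K,N)\to 0$ as $\delta\to 0$. This is the synthetic analog of the Cheeger-Colding almost splitting theorem and combines the splitting theorem above with a compactness and contradiction argument on blow-up sequences. Then, defining $k(x)$ to be the largest integer such that a $\delta_i$-splitting map exists at $x$ at a sequence of scales $r_i\downarrow 0$ with $\delta_i\downarrow 0$, a maximal-function / Lebesgue-differentiation argument on $(X,\m)$ shows that $k(\cdot)$ is realized at $\m$-a.e. point and is bounded above by $\lfloor N\rfloor$ via Bishop-Gromov.

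Finally I would argue that at $\m$-a.e. $x$ the factor $Y_x$ is a point. If not, a blow-up at a $\m$-generic point of $Y_x$ (available because the product structure is measure-preserving up to $\Phi(\delta)$) would produce, by the same construction, an extra $\delta$-splitting direction, contradicting the maximality of $k(x)$. Combining the outcomes, $\mathsf{Tan}(X,x)=\{(\mathbb{R}^{k(x)},d_E,\overline{0})\}$ at $\m$-a.e. $x$; the set of values $k(x)$ is contained in $\{1,\dots,\lfloor N\rfloor\}$, so in particular it is finite, which gives the statement. The technical heart, and hence the main obstacle, is transporting the construction and rigidity of $\delta$-splitting maps from the smooth Cheeger-Colding setting into the synthetic $\rcd{K}{N}$ framework, which requires Gigli's calculus, the self-improved Bochner inequality, and the almost-splitting theorem in its synthetic form.
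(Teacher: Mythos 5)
The paper does not prove Theorem \ref{th:Mo-Na}: it is imported verbatim as an external result, with the existence of Euclidean tangents attributed to \cite{Gi-Mo-Ra} and the uniqueness to the rectifiability theorem of \cite{Mo-Na}. So there is no internal proof to compare against; your proposal can only be judged as a reconstruction of the Mondino--Naber argument. As such a roadmap it is faithful: the ingredients you name ($\delta$-splitting maps built by heat-flow mollification of distance functions, Hessian control via the self-improved Bochner inequality, the synthetic almost-splitting theorem resting on Gigli's splitting theorem for $\rcd{0}{N}$-spaces, the bound $k\leq \lfloor N\rfloor$ from Bishop--Gromov, and the maximal-function argument) are exactly the ones used there. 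Be aware, though, that each of these is itself a substantial theorem, so what you have is an accurate table of contents rather than a proof.

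One point in your outline is stated in a way that would not suffice as written. You define $k(x)$ as the largest integer for which $\delta_i$-splitting maps exist along \emph{some} sequence of scales $r_i\downarrow 0$. That only controls the tangent cones obtained along that particular sequence; the theorem as stated in the paper requires $\mathsf{Tan}(X,x)$ to be the singleton $\{(\mathbb{R}^{k(x)},d_E,\overline{0})\}$, i.e.\ that \emph{every} blow-up limit is Euclidean. To get this you must upgrade to the existence of $\delta$-splitting maps of rank $k(x)$ at \emph{all} scales below some threshold $r_0(x,\delta)$ at $\m$-a.e.\ $x$ --- this is precisely what the maximal-function/weighted-scale-selection argument in \cite{Mo-Na} delivers --- and your final maximality contradiction (blowing up inside the factor $Y_x$ to manufacture a $(k+1)$-st splitting direction) additionally needs the ``tangents of tangents are tangents $\m$-a.e.'' principle of Preiss type, which has to be established for pointed measured Gromov--Hausdorff tangents of metric measure spaces and is not automatic. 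With those two points made explicit, the outline matches the known proof.
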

In the last section we will work with Ohta's definition of the $\mcp{K}{N}$-condition. Intuitively, it requires contraction properties of measures as the $\RCD$-condition but only for starting $\delta$ measures. The condition is stable w.r.t. to $\mathsf{pmGH}$-convergence \cite{Oh-MCP}. We give the specific shape of the condition for the case we will study. 
\begin{dfn}[$\mcp{2}{3}-condition$]\label{def:MCP}
A $\mms$, $(X,\dis ,\m)$, has the $(2,3)$-measure contraction property, $\MCP(2,3)$, if 
for every point $x \in X$ and a measurable set $A \subset X$ with $0 < \m(A) < \infty$ and $A \subset B(x,\pi)$   
there exists a probability measure $\pi\in \mathcal{P}(\geo(X))$ such that $(e_0)_\# \pi = 
\delta_x$, $(e_1)_\#\pi = \m(A)^{-1}\m|_{A}$, and
\begin{equation}\label{eq:mcp}
(e_t)_\# \left(t \frac{\sin^2(t\, l(\gamma))}{\sin^2(l(\gamma))}\m(A)\mathrm{d}\pi(\gamma)\right)  \leq \mathrm{d}\m \qquad t \in [0,1].
\end{equation}
\end{dfn}
\section{Metric measure spaces with $\G$ containing small subgroups}\label{se:mms}
We study $\mms$ where  $\G$ has small subgroups. Granted that $\G$ has the small subgroup property we show the existence of many automorphisms with a large fixed point set, $\fix{f}$, and vice versa: these type of automorphisms create small subgroups. This identification is the main step needed in the prove Theorem \ref{th:03}. 

Define for $r>0$, $x \in X$, and a subgroup $\Lambda \leq§ \iso$: $$\D{\Lambda}{r}{x} := \sup_{g\in \Lambda}\sup_{y \in B_{\frac{r}{2}}(x) } \dis(y,g(y)).$$ For fixed $\Lambda$, the function $\D{\Lambda}{r}{x} $ is continuous in $r$ and $x$ as long as every closed ball in $X$ is the closure of its interior  \cite{Ch-Co-II}. This holds true when $X$ is a length space, for example. We do not assume a fully supported measure in the coming proposition.

\begin{prop}\label{pr:01}
Let $(X,\dis,\m)$ be a $\mms$ where every closed ball coincides with the closure of the open ball. Assume that $X$ has $\m$-$a.e.$ Euclidean tangents.
If $\G$ has the small subgroups property, then for every $x\in X$,  $0<s$, and $0<\xi<1$ there exists a non-trivial subgroup $\Lambda=\Lambda_{x,s,\xi} \subset\G$ such that for every $g\in\Lambda$ 
\begin{equation}
\m(\fix{g}\cap B_{s}(x))\geq \xi\; \m(B_{s}(x)).
\label{eq:resultado}
\end{equation} 
\end{prop}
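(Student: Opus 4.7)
I plan to argue by contradiction: assume $\G$ has $\ssp$ and yet, for some specific $x\in X$, $0<s$, and $0<\xi<1$, no non-trivial subgroup $\Lambda\leq \G$ satisfies the required fixed-set bound on $B_s(x)$. Invoking $\ssp$, I extract a sequence of non-trivial subgroups $\Lambda_n\leq \G$ with $\D{\Lambda_n}{R}{x}\to 0$ for every $R>0$. By the contradiction hypothesis, each $\Lambda_n$ contains some $g_n$ with $\m(\fix{g_n}\cap B_s(x))<\xi\,\m(B_s(x))$, so that $\Lambda_n$ does not pointwise fix $B_s(x)$ and in particular $\D{\Lambda_n}{2s}{x}>0$.

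Next I would perform a blow-up. Fix a radius $R^*>s$ (say $R^*=4s$) and set $r_n:=\D{\Lambda_n}{2R^*}{x}$; by the above, $r_n$ is positive and tends to $0$. Using that the regular sets $(\R)_{\epsilon,\delta}$ exhaust a full-$\m$-measure subset of $X$ as $\epsilon,\delta\to 0$ and that the super-level sets $\{y\in B_{R^*}(x):\delta_h(y)>r_n/c\}$ are open of positive $\m$-measure for appropriate $h\in\Lambda_n$ and constant $c$, I can select $y_n\in B_{R^*}(x)\cap(\R)_{\epsilon_n,\delta_n}$ (chosen suitably away from $\partial B_{R^*}(x)$) and $h_n\in\Lambda_n$ with $\delta_{h_n}(y_n)\gtrsim r_n$. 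After passing to a subsequence so that the Euclidean tangent dimension $k(y_n)=k$ is constant, the rescaled spaces $(X,\dis/r_n,y_n)$ $\pGH$-converge to $(\mathbb{R}^k,d_E,0)$, and the subgroups $\Lambda_n$ admit a $\peGH$-limit $\Lambda_\infty\leq \mathrm{Iso}(\mathbb{R}^k)$ containing a non-trivial limit element $h_\infty$ of the $h_n$.

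The contradiction now comes from the rigidity of Euclidean isometries. For every fixed rescaled radius $R>0$, once $n$ is large enough that the rescaled ball $B_R(y_n)$ (equal to $B_{R r_n}(y_n)$ in the original metric) is contained in $B_{R^*}(x)$, every element of $\Lambda_n$ has original displacement at most $r_n$ on $B_R(y_n)$ and hence rescaled displacement at most $1$. Passing to the limit, every element of $\Lambda_\infty$ displaces every point of $\mathbb{R}^k$ by at most $1$. A Euclidean isometry $y\mapsto Ay+v$ with globally bounded displacement must satisfy $A=I$ (otherwise $|(A-I)y|$ grows linearly in $|y|$), and hence is a translation; a subgroup of $(\mathbb{R}^k,+)$ whose elements have norm at most $1$ must be trivial, since any nonzero translation $v$ has iterates $nv$ of unbounded norm. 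Thus $\Lambda_\infty=\{\mathbb{I}\}$, contradicting the non-triviality of $h_\infty$.

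The hard part, I expect, will be the careful simultaneous balancing of the scale $r_n$, the radius $R^*$, the margin from $\partial B_{R^*}(x)$, and the placement of $y_n$ in a uniformly regular set, in such a way that the rescaled displacement bound $\leq 1$ genuinely persists globally on $\mathbb{R}^k$ in the limit while $h_\infty$ remains non-trivial. This requires the interplay between the $2$-Lipschitz control on displacement functions (which allows me to compare the supremum of $\delta_h$ over $B_{R^*}(x)$ with its supremum over a slightly smaller ball), the full-$\m$-measure density of the regular set (which guarantees that the required $y_n$ exists), and the passage of subgroup actions through the $\peGH$-limit.
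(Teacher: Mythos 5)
Your overall strategy --- contradiction, extraction of small subgroups, blow-up at a regular point, and a rigidity statement for subgroups of $\mathrm{ISO}(\mathbb{R}^k)$ --- is the same as the paper's, but the normalization you propose leaves a genuine gap at the heart of the argument. You set $r_n:=\D{\Lambda_n}{2R^*}{x}$, the supremum of the displacement over the \emph{fixed macroscopic ball} $B_{R^*}(x)$, and you need simultaneously (i) a quantitatively regular point $y_n$, at definite distance from $\partial B_{R^*}(x)$, with $\delta_{h_n}(y_n)\gtrsim r_n$ (so that $h_\infty$ is non-trivial), and (ii) the bound $\delta_g\leq r_n$ on $B_{Rr_n}(y_n)$ for every fixed $R$ (so that the limit displacements are globally bounded by $1$). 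These two requirements are in tension and neither is established. The near-maximizer of $\delta_{h_n}$ over $B_{R^*}(x)$ may lie arbitrarily close to the boundary, and more seriously the displacement of $\Lambda_n$ on the region where you can actually locate a regular point may be much smaller than $r_n$ (an isometry can move points appreciably only far from $x$, or only off the quantitatively regular set), in which case the rescaled $h_n$ converge to the identity. If instead you renormalize by the displacement at $y_n$ itself, the $2$-Lipschitz estimate $\delta_g(z)\leq\delta_g(y_n)+2\,\dis(y_n,z)$ only yields $\delta_{g_\infty}(y)\leq 1+2|y|$ in the limit --- linear growth, which is satisfied by rotations --- so your final step (``globally bounded displacement $\Rightarrow$ translation $\Rightarrow$ trivial bounded subgroup'') no longer applies. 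A second, related gap is the order of quantifiers between the blow-up scale and the regularity scale: you choose the subgroups $\Lambda_n$ (hence $r_n$) first and only then look for $y_n\in(\R)_{\epsilon_n,\delta_n}$, but nothing guarantees $r_n<\delta_n$, i.e.\ that $y_n$ is $\epsilon_n$-regular \emph{at the scale you blow up}; the set $(\R)_{\epsilon,\delta}$ only exhausts $\R_\epsilon$ as $\delta\downarrow 0$, and the admissible $\delta$ near your chosen $y_n$ may be far smaller than $r_n$.

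The paper resolves exactly these two points, and it is worth seeing how. First, it fixes $\epsilon$ and then $\delta_\epsilon$ with $\m((\R)_{\epsilon,\delta_\epsilon}\cap B_s(x))>\xi\,\m(B_s(x))$ \emph{before} invoking the $\ssp$, and then takes a small subgroup with displacement $<\delta_\epsilon/20$ on $B_{2s}(x)$; combined with the contradiction hypothesis $\m(\fix{f}\cap B_s(x))<\xi\,\m(B_s(x))$ this produces a point $x_\epsilon\in(\R)_{\epsilon,\delta_\epsilon}\cap B_s(x)\setminus\fix{g}$, i.e.\ a \emph{regular} point that is actually moved --- this is where the measure bound $\xi$ is really used, and your sketch never exploits it. Second, instead of a global displacement bound it applies the intermediate value theorem to $r\mapsto\D{\Lam}{r}{\x}-r/20$ to find a scale $\re\in[\,20\,\dis(\x,g(\x)),\,\delta_\epsilon)$ with $\D{\Lam}{\re}{\x}=\re/20$ exactly; this single identity provides both the upper bound (compactness of the limit group with displacement $\leq 1/20$ on the rescaled half-ball) and the lower bound (non-triviality), and reduces the Euclidean contradiction to the fixed-ball statement that no non-trivial subgroup of $\mathrm{ISO}(\mathbb{R}^k)$ can have $\D{\mathsf{H}}{1}{\vec 0}$ equal to $1/20$. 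Your closing sentence correctly identifies the ``careful simultaneous balancing'' as the hard part, but that balancing is the proof; without a device such as the intermediate-value normalization at an adaptively chosen scale, the argument does not close.
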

\begin{proof}
We assume that $\m(B_s(x))>0$ since the inequality above trivially holds true otherwise. We argue by contradiction. The strategy is the following: assuming that inequality (\ref{eq:resultado}) doesn't hold we will find for every $\epsilon > 0$ a quadruple $(\delta_{\epsilon},\re,x_{\epsilon},\Lam) \in (\mathbb{R}^+)^2\times X\times 2^{\G}$ with the following properties:
\begin{equation}
\begin{aligned}
\bullet \;& 0<r_{\epsilon}\leq\delta_{\epsilon}  \\
\bullet \;& x_{\epsilon} \in (\mathcal{R})_{\epsilon,\delta_{\epsilon}}\\
\bullet \;& \Lam \leq \mathrm{ISO}(X) \text{ is a subgroup}\\
\bullet \;& \D{\Lam}{\re}{\x} = \frac{\re}{20} .
\end{aligned}
\label{eq:condiciones}
\end{equation}
The existence of such a family of quadruples would lead to a contradiction and thus, would prove the proposition. Indeed, observe that if for every $\epsilon > 0$ there exists a quadruple as above, then for a sequence $\epsilon_n \rightarrow 0$ there exists a subsequence  $\epsilon_n$ (denoted in the same way) such that, in the $\eGH$-sense, the scaled spaces below converge to
\[
\left( B_{\re}(\x),\frac{1}{\re}\mathrm{d},\Lam\right) \xrightarrow{\eGH}
\left( B_{1}(\vec{0})\subset \mathbb{R}^k,\mathrm{d}_E,\Lambda_{\infty}\right),
\]
where $k\in \mathbb{N}$, and $\Lambda_{\infty} \leq \mathrm{ISO}(\mathbb{R}^k)$ is a non-trivial subgroup satisfying $\D{\Lambda_{\infty}}{1}{\vec{0}}=\frac{1}{20}$. This creates the contradiction, since every non-trivial subgroup of Euclidean isometries $\mathsf H$ fulfills $\D{\mathsf H}{1}{\vec{0}}\leq \frac{1}{20}$.  

We proceed to construct a family of quadruples satisfying conditions (\ref{eq:condiciones}). Suppose that (\ref{eq:resultado}) does not hold. That is, there exist $x \in X$, $0<s$, and $0<\xi<1$ such that for every non-trivial subgroup $\mathsf{H}\subset \G$ there exists an $f\in \mathsf{H}$ where $$\m(\fix{f}\cap B_{s}(x))<\xi \; \m({B_{s}(x)}).$$ Note that necessarily$f\neq \mathbb{I}$. Take $\epsilon>0$ and choose small enough $\del \in \mathbb{R}$ so that $0<\del< s $, and
\begin{equation}
\xi \; \m(B_s(x))<\m( (\mathcal{R})_{\epsilon,\del}\cap B_s(x)).
\label{eq:regular grande}
\end{equation}
The $\m$-$a.e.$ Euclidean tangents of $X$, together with the continuity from below of the measure and the fact that $\R \subset (\R)_{\epsilon} = \cup_{\delta > 0}(\R)_{\epsilon,\delta}$ make possible the choice of such a $\del$. Indeed, since for $\delta'\leq \delta''$ it holds that  $ (\R)_{\epsilon,\delta''} \subset (\R)_{\epsilon,\delta'}$  we can write $(\R)_{\epsilon}=\cup_{n \in\mathbb{N}}(\R)_{\epsilon,1/n}$ as a countable union of sets. Now just notice that 
\begin{eqnarray*} 
 \m(B_s(x))&=&\m(B_s(x)\cap\R_{\epsilon})=\m(B_s(x)\cap (\cup_{n \in\mathbb{N}}(\R)_{\epsilon,1/n}))\\&=&\lim_{n\rightarrow \infty}\m(\cup_{j \leq n}(B_s(x)\cap (\R)_{\epsilon,1/j}))\\&=&\lim_{n\rightarrow \infty}\m(B_s(x)\cap(\R)_{\epsilon,1/n}).
 \end{eqnarray*}
  Choose $n \in \mathbb{N}$ big enough and take $\del <\min\{ s,1/n\}$. 
  
  Inequality (\ref{eq:regular grande}) combined with the $reductio\; ad\; absurdum$ assumption imply that for every non-trivial $\mathsf H \leq \iso$ there exist $f(\neq \mathbb{Id}) \in \mathsf H$ such that the set $B_s(x)\cap (\mathcal{R})_{\epsilon,\del}\setminus \fix{f}$ is not empty.

In view of the small subgroups property of G, we can find a non-trivial small subgroup
 \begin{align*}
\Lam \subset U_{\epsilon} := & \left\{ g\in\G\; \| \; \sup_{y \in B_{2s}(x)} \mathrm{d}(y,g(y)) < \frac{\del}{20}  \right \} \\ = &
\left \{ g\in\G \;\| \; g(y)\in B_{\del /20}(y) \text{ for all } y \in \overline{B_{2s}}(x)  \right\}.  
\end{align*} 
In particular, 
there exist $g(\neq \mathbb{Id})\in \Lam$ and $x_{\epsilon}\in B_s(x) $ such that
\begin{eqnarray*}
x_{\epsilon}&\in& B_s(x)\cap (\R)_{\epsilon,\del}   \setminus \fix{g} \text{ and }\\
  0&<& \dis(\x,g(\x)) <  \frac{\del}{20}.
\end{eqnarray*}
Denote by $\theta=\theta(\x) := 20\; \dis(\x,g(\x))<\del$. By construction it follows that 
\begin{equation*}
\begin{aligned}
\frac{1}{20}\theta \leq &\D{\Lam}{\theta}{x_{\epsilon}} \\
&\D{\Lam}{\del}{x_\epsilon} \leq \D{\Lam}{4s}{x} < \frac{1}{20}\del.
\end{aligned}
\end{equation*}
Finally, the continuity of $\D{\Lam}{\circ}{x_\epsilon}$ and the intermediate value theorem imply that there exists $\re \in \mathbb{R}$ such that $ \D{\Lam}{\re}{x_{\epsilon}}=\frac{1}{20}\re  $ for some $\theta \leq \re < \del$. Hence for $\epsilon > 0$ there exists a quadruple $(\delta_{\epsilon},\re,x_{\epsilon},\Lam)$ satisfying (\ref{eq:condiciones}).
\end{proof}
\begin{rmk}\label{rm:01}
Note that the logical negation to the conclusion of Proposition \ref{pr:01} is equivalent to condition \conditiona of Theorem \ref{th:03}.
\end{rmk}
Next, we see that we can generate small subgroups from the existence of automorphisms with large fixed point sets. 
\begin{prop}\label{pr:bigfix implies ssp}
Let $(X,\dis,\m)$ be a locally compact $\mms$ where every closed ball coincides with the closure of its interior. Then $\G$ has the small subgroups property if for every $x\in X$,  $0<s$, and $0<\xi'<1$ there exists a non-trivial subgroup $\Lambda=\Lambda_{x,s,\xi'} \subset\G$ such that for every $g\in\Lambda$
\begin{equation*}
\m(X \setminus \fix{g} \cap B_s(x)) \leq \xi'\;\m(B_s(x)).
\end{equation*}
\end{prop}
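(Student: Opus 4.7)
The plan is to verify the small subgroups property directly. Since $\G$ carries the compact-open topology, a basis of neighborhoods of $\mathbb{I}$ is given by the sets
\[
U_{K,\epsilon} := \{g\in\G : \sup_{y\in K}\dis(y,g(y)) < \epsilon\}, \qquad K\subset X\ \text{compact},\ \epsilon>0,
\]
so it suffices to exhibit a non-trivial subgroup of $\G$ inside each $U_{K,\epsilon}$. I would fix such $K$ and $\epsilon$, then choose $x\in X$ and $s>0$ large enough that $B_{\epsilon/2}(y)\subset B_s(x)$ for every $y\in K$; this is possible because $K$ is bounded in the locally compact space $(X,\dis)$.

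The crux of the argument is a uniform positive lower bound on the measures of the small balls centered at $K$. The function $y\mapsto \m(B_{\epsilon/2}(y))$ is lower semi-continuous: if $y_n\to y$ and $r'<\epsilon/2$, then $B_{r'}(y)\subset B_{\epsilon/2}(y_n)$ eventually, and letting $r'\uparrow\epsilon/2$ gives $\liminf \m(B_{\epsilon/2}(y_n))\geq \m(B_{\epsilon/2}(y))$. Since $\supp(\m)=X$, this function is strictly positive everywhere, and lower semi-continuity together with compactness of $K$ yields
\[
c \;:=\; \inf_{y\in K}\m(B_{\epsilon/2}(y)) \;>\; 0.
\]
Local finiteness of $\m$ gives $0<\m(B_s(x))<\infty$.

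Now I would apply the standing hypothesis to the triple $(x,s,\xi')$ with $\xi':= c/(2\,\m(B_s(x)))\in (0,1)$. This produces a non-trivial subgroup $\Lambda\subset\G$ such that $\m(B_s(x)\setminus \fix{g})\leq c/2$ for every $g\in\Lambda$. For an arbitrary $g\in\Lambda$ and $y\in K$, if $\fix{g}\cap B_{\epsilon/2}(y)$ were empty then $B_{\epsilon/2}(y)\subset B_s(x)\setminus\fix{g}$ and hence $c\leq \m(B_{\epsilon/2}(y))\leq c/2$, a contradiction. Choosing any $y^*\in \fix{g}\cap B_{\epsilon/2}(y)$ and using that $g$ is an isometry fixing $y^*$,
\[
\dis(y,g(y)) \;\leq\; \dis(y,y^*) + \dis(g(y^*),g(y)) \;=\; 2\,\dis(y,y^*) \;<\; \epsilon,
\]
so $g\in U_{K,\epsilon}$. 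Hence $\Lambda\subset U_{K,\epsilon}$, proving the small subgroups property.

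The main delicate point is the uniform bound $c>0$: this is the only place where local compactness, $\supp(\m)=X$, and local finiteness of $\m$ are jointly used, and it is precisely what allows a measure-theoretic condition on $\fix{g}$ to be promoted to the metric closeness of $g$ to the identity required by the compact-open topology. Once $c$ is secured, the triangle inequality and the isometry property finish the argument without further input.
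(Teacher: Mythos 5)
Your proof is correct and follows essentially the same strategy as the paper's: a uniform positive lower bound on the measures of small balls centered in a compact set (from local compactness and $\supp(\m)=X$), then the hypothesis applied with a correspondingly small $\xi'$, and finally the triangle-inequality observation that a fixed point within $\epsilon/2$ of $y$ forces $\dis(y,g(y))<\epsilon$ (the paper states the contrapositive: if $\dis(z,g(z))>2t$ then $B_t(z)$ misses $\fix{g}$). The only cosmetic difference is that you work with arbitrary basic neighborhoods $U_{K,\epsilon}$ and obtain the bound $c>0$ by lower semicontinuity of $y\mapsto\m(B_{\epsilon/2}(y))$ on $K$, whereas the paper indexes its neighborhood basis by closed balls $\overline{B}_N(x)$ and extracts a minimizing subsequence using properness.
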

\begin{proof}
We give a sequence $\{\xi_N\}_{N\in \N}\subset (0,1)$ which generates, according to the hypothesis, a sequence of non-trivial subgroups  $\{\Lambda_{x,N,\xi_N}\}_{N\in \N}\leq \G$ such that $\Lambda_N \subset U_{N}(\ni \mathbb{I})$ for every $N\in\N$, where $\{U_N\}_{N\in \N}\subset \G$ is local basis of the compact-open topology at $\mathbb{I}$. Thus proving the existence of small subgroups of $\G$. 

Accordingly we fix $x\in X$, $N\in\N$, and define 
$$\xi'_{N}:=\allowbreak \m(B_N(x))^{-1} \allowbreak \inf_{y \in B_N(x)}\{\m(B_{1/N}(y)\cap B_N(x))\}.$$ We claim that $0<\xi'_N$. Indeed, choose a converging sequence\footnote{Using a subsequence if necessary.} $y_m \to y_\infty \in \overline{B}_N(x)$ such that $\liminf_{m\to\infty}\m(B_N(x))^{-1}\m(B_{1/N}(y_m)\cap B_N(x))) = \xi'_{N}$.\footnote{The existence of such subsequence is guarantee since locally compact, complete metric spaces for which the closure of open balls coincides with closed balls are proper.}  Since the measure $\m$ has full support there exists a small ball, $B_\tau(y_\infty)$, with $\m(B_\tau(y_\infty) \cap B_N(x))>0$ which is a lower bound of $\m(B_{1/N}(y_m)\cap B_N(x))$ for large enough  $m$, hence,  validating the claim. 
We take $0<\xi_N<\xi'_N$ and write $\Lambda_N:=\Lambda_{x,N,\xi_{N}}$ for the  non-trivial subgroup given by the hypothesis for the triple $(x,N,\xi_N)$. By construction we verify that
\begin{equation}\label{eq:bound fix co}
\m(X \setminus \fix{f} \cap B_N(x)) < \m(B_{1/N}(y)\cap B_N(x))
\end{equation}
for every $y \in B_N(x)$ and $f\in \Lambda_N$.

Observe now that if  $\dis(z,g(z))> 2\,t $ then $B_t(z) \cap B_R(x)\subset X \setminus \fix{f} \cap B_R(x)$ for $g\in \G$, $z\in X$, and numbers $t,R\in \RR^+$. Therefore, we conclude from \eqref{eq:bound fix co}  that for every $y \in B_N(x)$ and $f\in \Lambda_N$ we have that $d(y,f(y))<2/N$. Hence $\Lambda_N$ is contained in the neighborhood of the identity:
\begin{equation*}
U_{N}:=\left \{ g\in \G \,\|\, \dis(y,g(y))< 3/N \text{ for every } y\in \overline{B}_N(x) \right \}.
\end{equation*}
Accordingly, the proof is complete considering that the choice of $N$ was arbitrary.
\end{proof}
\section{Proof of Theorem \ref{th:03}}\label{se:pr}
We start this section with a lemma that shows that the uniqueness of optimal geodesic maps is sufficient to guarantee that non-trivial isometries have fixed point sets of measure zero. In particular, Theorem \ref{th:04} is concluded from this result and Theorem \ref{th:03} after taking into consideration that locally compact, complete length spaces are geodesic.

\begin{lemma}[Zero measure of the fixed point set]\label{lm:02}
Let $(X,\dis,\m)$ be a $\mms$ such that for every $\mu_0,\mu_1 \in \p(X) $ with $\mu_0 \Lt \m$ there exists a unique optimal geodesic plan $\pi \in \optgeo{\mu_0}{\mu_1}$. Furthermore, assume that $\pi$ is concentrated on a set of geodesics, $\Gamma \subset \geo(X)$, such that the map $\e_0:\Gamma \rightarrow X$ is injective. Let $f\neq \mathbb{I}$ be an isometry of $X$. Then $\m(\fix{f})=0 $.
\end{lemma}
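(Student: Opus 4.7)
The plan is to argue by contradiction: assume $\m(\fix{f})>0$ and produce an optimal plan that cannot be concentrated on any $\e_0$-injective set, violating the hypothesis.

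Since $\m$ is finite on bounded sets and $\m(\fix{f})>0$, choose a Borel set $A\subset \fix{f}$ with $0<\m(A)<\infty$ and set $\mu_0:=\m(A)^{-1}\m|_A$, so $\mu_0\Lt \m$ and $\mu_0(\fix{f})=1$; in particular $f_{\#}\mu_0=\mu_0$. Because $f\neq \mathbb{I}$, pick $q\in X$ with $f(q)\neq q$, and let $\pi\in\optgeo{\mu_0}{\delta_q}$ be the unique optimal geodesic plan given by the hypothesis; it is concentrated on some $\Gamma_\pi\subset\geo(X)$ on which $\e_0$ is injective.

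Then I would introduce $F\colon\geo(X)\to\geo(X)$ defined by $F(\gamma)_t:=f(\gamma_t)$, which is a bijection on $\geo(X)$ because $f$ is an isometry, and put $\pi':=F_\#\pi$. The invariance $f_\#\mu_0=\mu_0$ yields $(\e_0)_\#\pi'=\mu_0$ and $(\e_1)_\#\pi'=\delta_{f(q)}$, and the identity $\dis(f(\gamma_0),f(\gamma_1))=\dis(\gamma_0,\gamma_1)$ implies that $\pi'$ has the same cost as $\pi$, so $\pi'\in\optgeo{\mu_0}{\delta_{f(q)}}$. Moreover, $\dis(x,q)=\dis(f(x),f(q))=\dis(x,f(q))$ for every $x\in\fix{f}$, whence $W_2^2(\mu_0,\delta_q)=W_2^2(\mu_0,\delta_{f(q)})$.

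Next, I would form $\mu_1:=\tfrac12(\delta_q+\delta_{f(q)})$ and the mixture $\tilde\pi:=\tfrac12\pi+\tfrac12\pi'$, whose marginals are $\mu_0$ and $\mu_1$. The same distance identity shows that every coupling of $\mu_0$ and $\mu_1$ has cost equal to $\int \dis(x,q)^2\,d\mu_0(x)$; hence $W_2^2(\mu_0,\mu_1)$ coincides with the cost of $\tilde\pi$, so $\tilde\pi$ is optimal. By the hypothesis applied to $(\mu_0,\mu_1)$, this $\tilde\pi$ is the unique optimal geodesic plan and is concentrated on some $\e_0$-injective set $\Gamma'$.

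Finally, from $\tilde\pi(\Gamma')=1$ together with $\tilde\pi=\tfrac12\pi+\tfrac12\pi'$, nonnegativity forces $\pi(\Gamma')=\pi'(\Gamma')=1$; in particular, for $\pi$-a.e.\ geodesic $\gamma$, both $\gamma\in\Gamma'$ and $F(\gamma)\in\Gamma'$. Since $\gamma_0\in\fix{f}$, we have $\e_0(F(\gamma))=f(\gamma_0)=\gamma_0=\e_0(\gamma)$, so the $\e_0$-injectivity on $\Gamma'$ forces $\gamma=F(\gamma)$ for $\pi$-a.e.\ $\gamma$. Comparing endpoints yields $\gamma_1=f(\gamma_1)$, i.e.\ $q=f(q)$, contradicting the choice of $q$.

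The main delicate point is the optimality of the mixture $\tilde\pi$: one might worry that a clever rearrangement of mass from $A$ between the targets $q$ and $f(q)$ could lower the transport cost. The key observation preventing this, and the geometric input driving the whole proof, is that $\mu_0$ is entirely supported on $\fix{f}$, so the isometry $f$ equates the distances from every $x\in\supp\mu_0$ to $q$ and to $f(q)$, making every coupling of $\mu_0$ with $\mu_1$ cost the same and therefore every transport plan optimal.
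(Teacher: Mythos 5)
Your proof is correct, and it reaches the contradiction by a different mechanism than the paper. The paper works directly with the unique plan $\pi\in\optgeo{\mu_0}{\mu_1}$ for $\mu_1=\frac12(\delta_x+\delta_{f(x)})$, splits it into the geodesics ending at $x$ and those ending at $f(x)$, pushes the two halves forward by $\hat f$ and $\widehat{f^{-1}}$ respectively, and checks that the result is a second, distinct optimal geodesic plan; there the $\e_0$-injectivity is used only to see that the two halves emanate from disjoint subsets of $A$ (so the competitor really differs from $\pi$), and the final contradiction is with \emph{uniqueness}. You instead build an optimal plan by hand as the mixture $\tfrac12\pi+\tfrac12\hat f_{\#}\pi$ of the transport to $\delta_q$ and its $f$-image, observe that all couplings of $\mu_0$ with $\mu_1$ have equal cost because $\mu_0$ is supported on $\fix{f}$ (a nice simplification that makes optimality immediate), identify the mixture with the unique plan via the hypothesis, and then contradict the \emph{$\e_0$-injectivity} directly through $\e_0(\hat f(\gamma))=\e_0(\gamma)$. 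Your route buys a trivial optimality verification and avoids having to check that the competitor plan is distinct from the original; the paper's route avoids the auxiliary transport to a single Dirac mass. One small point worth making explicit in your write-up: choose $A$ bounded (e.g.\ $A=\fix{f}\cap B_R(x_0)$ for suitable $R$) so that $\mu_0$ has finite second moment and lies in $\p(X)$; the rest of your argument is unaffected.
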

\begin{proof}
We proceed by contradiction. Suppose that there exist $\mathbb{I}\neq f \in \iso$, and a set $A\subset \fix{f}$ with positive measure. Let $x\in X \setminus \fix{f}$ and define the probability measures $\mu_0 := \m(A)^{-1}\m|_{A}$ and $\mu_1:=\frac{1}{2}(\delta_{x} + \delta_{f(x)})$. We denote by $\pi \in \optgeo{\mu_0}{\mu_1}$ the unique geodesic plan between $\mu_0$ and $\mu_1$. Let $\Gamma \subset \geo(X)$ be the set where $\pi$ is concentrated and where $\e_0$ is injective. Set:
\begin{eqnarray*}
\Gamma_1 &:=&\left\{ \gamma \in \Gamma \;\;|\;\; \e_1(\gamma)=x\right\}\\
\Gamma_2 &:=& \left\{ \gamma \in \Gamma \;\;|\;\; \e_1(\gamma)=f(x)\right\}\\
A_i&:=&\e_0(\Gamma_i) \qquad i=1,2.
\end{eqnarray*}
$\Gamma_1 \;(\Gamma_2)$ is the subset of geodesics of $\Gamma$ that end in $x \;(f(x))$ and $A_i$ is the projection of $\Gamma_i$ onto the set $A$. We have that none of these sets are empty, that the measures $\pi\left(\Gamma \setminus ( \Gamma_1\cup \Gamma_2)\right)=0=\m\left(A\setminus (A_1 \cup A_2)\right) $ and that $A_1\cap A_2 = \emptyset$. The last fact is a consequence of the injectivity of $e_0$. We define now the measure $\pi'\in\mathcal{P}(\geo(X))$ as
\begin{equation}
\pi':=(\hat{f})_{\#}\pi|_{\Gamma_1}+(\reallywidehat{{f^{-1}}})_{\#}\pi|_{\Gamma_2},
\end{equation}
where the  bijection of $\geo(X)$, $\gamma\mapsto g\circ \gamma$, induced by some $g\in \iso$  is written as $\hat{g}:\geo(X)\rightarrow\geo(X)$. The measure $\pi'$ is a symmetric analog of $\pi$ but $\pi'\neq \pi$. Indeed, note that $\pi'(\hat{f}( \Gamma_1)) = 1/2 \neq 0 =\pi(\hat{f}(\Gamma_1))$ because $\hat{f} (\Gamma_1)\cap \Gamma_1= \emptyset $  by construction.  

We claim that $\pi'\in\optgeo{\mu_0}{\mu_1}$ is also a dynamical plan. This would contradict the hypothesis of the uniqueness of $\pi$ and finish the proof of the lemma. We proceed to verify the claim.

We need to show that $\pi'$ minimizes $\int_{\geo(X)} l(\gamma) d\rho$. The minimum taken over all measures $\rho \in \mathcal{P}(\geo(X))$ such that $(e_i)_{\#}\rho=\mu_i$ for $i=0,1$. We check that the pushforwards of $\pi'$ under the evaluation map are as above. For this we observe that for $g\in \iso$, $B\subset X$, and $t\in[0,1]$ 
\begin{eqnarray*}
\hat{g} \circ \e_t^{-1} (B) &=& \hat{g}\left( \left \{ \gamma \in \geo(X) \;\;|\;\; \e_t(\gamma)\in B  \right \} \right) \\
&=& \left \{ \gamma \in \geo(X) \;\;|\;\; \e_t \in g(B) \right \} = \e_t^{-1} \circ g (B),
\end{eqnarray*}
and that $\hat{g}^{-1}=\reallywidehat{g^{-1}}$. Next we compute the pushforward of $\pi'$ under $\e_t$:
\begin{eqnarray*}
(\e_t)_{\#}\pi'&=&(e_t\circ\hat{f})_{\#}\pi|_{\Gamma_1}+(e_t\circ \reallywidehat{f^{-1}})_{\#}\pi|_{\Gamma_2} \\
&=& (f)_\# (e_t)_{\#}\pi|_{\Gamma_1}+(f^{-1})_\# (e_t)_{\#}\pi|_{\Gamma_2}.
\end{eqnarray*}
Then $(\e_0)_\# \pi'= \mu_0 $ since $f|_A=\mathbb{I}|_A$. As for the other pushforward we have that $(\e_1)_\#\pi' = (f)_\#(\frac{1}{2}\delta_x) +(f^{-1})_\#(\frac{1}{2}\delta_{f(x)})=\frac{1}{2}(\delta_x+\delta_{f(x)})=\mu_1$. 
To finish we see that $\pi'\in\optgeo{\mu_0}{\mu_1}$ by  showing that the value of $\int_{\geo(X)} l(\gamma) \dis \pi'$ is the minimum of the functional.
\begin{eqnarray*}
\int_{\geo(X)} l(\gamma) \dis \pi'(\gamma)& =&\int_{\geo(X)} l(\gamma) \dis \left( (\hat{f})_{\#}\pi|_{\Gamma_1}+(\reallywidehat{f^{-1}})_{\#}\pi|_{\Gamma_2} \right)(\gamma)\\
&=& \int_{\geo(X)} l\circ\hat{f}(\gamma)\cdot \chi_{\Gamma_1} (\gamma) \dis\pi (\gamma) + l\circ \reallywidehat{f^{-1}}(\gamma) \cdot \chi_{\Gamma_2}(\gamma)\dis\pi (\gamma)\\
&=&  \int_{\geo(X)} l(\gamma) \cdot (\chi_{\Gamma_1}+\chi_{\Gamma_2})(\gamma)\dis \pi (\gamma )=\int_{\geo(X)} l(\gamma) \dis \pi(\gamma).
\end{eqnarray*}
\end{proof}
\begin{rmk}\label{rmk:weaken}
The hypothesis in Lemma \ref{lm:02} can be we weaken. We may require the existence of the unique geodesic plan only for final measures satisfying $\mu_1\Lt\m$ rather than for an arbitrary $\mu_1 \in \p(X)$. We can repeat the proof choosing as final measure $$\mu_1:=\frac{1}{2}(\m(B_r(x))^{-1}\m|_{B_r(x)}+\m(f(B_r(x)))^{-1}\m|_{f(B_r(x))})$$ where $B_r(x)\subset \fix{f}^c$ is a sufficiently small ball.  
\end{rmk}
Consistently with Theorem \ref{th:Gi-Ra-St} we obtain
\begin{cor}\label{co:01}
Let $(X,\dis,\m)$ be an essentially non-branching $\mcp{K}{N}$-space and $f\in\iso$. If $\m(\fix{f})>0$ then $f=\mathbb{I}$.

In particular, this holds true for $\RCD$-spaces, essentially non-branching $\CD$-spaces, and essentially non-branching $\mathsf{CD}$-spaces.
\end{cor}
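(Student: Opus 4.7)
The plan is to deduce this corollary as an immediate consequence of Lemma \ref{lm:02} applied to the optimal transport structure guaranteed by Theorem \ref{th:Gi-Ra-St}. The argument is essentially a matter of matching hypotheses.

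First, I would verify that the hypothesis of Lemma \ref{lm:02} is met by any essentially non-branching $\mcp{K}{N}$-space $(X,\dis,\m)$. By Theorem \ref{th:Gi-Ra-St}, for every $\mu_0 \Lt \m$ and every $\mu_1 \in \p(X)$ there is a unique optimal geodesic plan $\pi \in \optgeo{\mu_0}{\mu_1}$; moreover, $\pi$ is concentrated on a set $\Gamma \subset \geo(X)$ on which $e_t$ is injective for every $t \in [0,1)$. In particular $e_0|_{\Gamma}$ is injective, so both the uniqueness-of-plan and the injectivity-of-$e_0$ requirements of Lemma \ref{lm:02} are fulfilled.

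Next, I apply Lemma \ref{lm:02} in its contrapositive form: any $f \in \iso$ with $\m(\fix{f})>0$ must be the identity. This yields the first assertion of the corollary. Note that the lemma concerns $\iso$, which is already the largest of the two automorphism groups considered in the paper, so no separate argument for $\isom$ is needed.

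Finally, the ``in particular'' statement follows because $\RCD$-spaces, essentially non-branching $\CD$-spaces, and essentially non-branching $\mathsf{CD}$-spaces are all instances of essentially non-branching $\mcp{K}{N}$-spaces, as recalled in the excerpt just before Theorem \ref{th:Mo-Na}. I do not anticipate any substantive obstacle: the corollary is a clean composition of two results already available in the preliminaries, and the only subtlety worth flagging is to make sure that the injectivity statement in Theorem \ref{th:Gi-Ra-St} (valid for $t \in [0,1)$) is applied at $t=0$, which is exactly what Lemma \ref{lm:02} needs.
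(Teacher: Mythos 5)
Your proposal is correct and is exactly the route the paper takes: the corollary is stated as an immediate consequence of Theorem \ref{th:Gi-Ra-St} (which supplies the unique optimal geodesic plan concentrated on a set where $e_0$ is injective) combined with Lemma \ref{lm:02}, and the ``in particular'' clause follows from the inclusions recalled after Theorem \ref{th:Gi-Ra-St}. Your observation that the injectivity at $t=0$ is the relevant instance of the $t\in[0,1)$ statement is the right hypothesis-matching check.
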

In order to use Gleason and Yamabe's characterization Theorem \ref{th:Gl-Ya} we need to show that $\G$ is a locally compact topological group. Recall that van Dantzig and van der Waerden have proved that $\iso$ is locally compact, granted that $X$ is locally compact and connected, see Theorem \ref{th:Da-Wa}. Below we prove that $\isom$ is a closed subgroup of $\iso$.
\begin{lemma}\label{cl:01}
Let $(X,\dis,\m)$ be a connected, locally compact $\mms$. Then $\isom$ is a locally compact closed subgroup of $\iso$ with respect to the compact-open topology.
\end{lemma}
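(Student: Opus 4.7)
The plan is to split the statement into two parts: (i) $\isom$ is closed in $\iso$, and (ii) given (i), local compactness of $\isom$ follows from local compactness of $\iso$. Part (ii) is immediate from Theorem \ref{th:Da-Wa}, which applies since $X$ is connected and locally compact, combined with the standard fact that closed subgroups of locally compact topological groups are themselves locally compact. So the substance of the proof is in part (i).

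For (i), I would let $(f_n) \subset \isom$ converge in the compact-open topology to some $f \in \iso$ and show that $f_{\#}\m = \m$. The strategy is to establish the weak convergence $(f_n)_{\#}\m \to f_{\#}\m$ tested against $C_c(X)$: since each $(f_n)_{\#}\m$ equals $\m$, this would give $\int \phi \, d(f_{\#}\m) = \int \phi \, d\m$ for every $\phi \in C_c(X)$, and Radon measures on a locally compact separable metric space are uniquely determined by their $C_c$-integrals (Riesz representation), forcing $f_{\#}\m = \m$.

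To prove the desired weak convergence, fix $\phi \in C_c(X)$ with $K := \supp\phi$. Compact-open convergence of isometries is equivalent to pointwise convergence (as recalled in Remark \ref{re:01}), hence $\phi \circ f_n \to \phi \circ f$ pointwise with uniform bound $\|\phi\|_\infty$. The key step is to bound the supports of the $\phi \circ f_n$ by a single bounded set. Choosing $x_0 \in X$, the sequence $f_n(x_0)$ converges and therefore lies in some ball $B_M(f(x_0))$; setting $R := \sup_{y \in K}\dis(y,f(x_0)) < \infty$, the isometry identity $\dis(x,x_0) = \dis(f_n(x),f_n(x_0))$ forces $f_n^{-1}(K) \subset B_{R+M}(x_0)$ for every $n$. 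Since $\m$ is finite on bounded sets by the standing assumption on $\mms$, the function $\|\phi\|_\infty \chi_{B_{R+M}(x_0)}$ serves as an $\m$-integrable dominator, and the dominated convergence theorem delivers $\int \phi \circ f_n \, d\m \to \int \phi \circ f \, d\m$, completing the argument.

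The main technical obstacle is precisely the uniform support bound: a connected, locally compact metric space need not be proper, so one cannot simply take closures of large balls to obtain compact sets on which to exploit uniform convergence. The remedy is to use the isometric rigidity of the $f_n$ to transfer the convergence of the single sequence $f_n(x_0) \to f(x_0)$ into a uniform bound on the preimages $f_n^{-1}(K)$; isometric rigidity here substitutes for properness in exactly the way needed to make dominated convergence applicable.
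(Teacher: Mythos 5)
Your proposal is correct and follows essentially the same route as the paper: closedness of $\isom$ via weak convergence of the pushforwards $(f_n)_{\#}\m$ tested against compactly (resp.\ boundedly) supported continuous functions, with dominated convergence supplied by a uniform bound on the supports of $\phi\circ f_n$, and local compactness deduced from being a closed subgroup of the locally compact group $\iso$. Your explicit derivation of the uniform support bound from the isometry identity $\dis(x,x_0)=\dis(f_n(x),f_n(x_0))$ is a slightly more careful justification of the step the paper attributes to uniform convergence on compacta, but the argument is the same in substance.
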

\begin{proof}
We show that $\isom $ is closed. The local compactness of $\isom$ follows from the fact that $\isom$ is a closed subgroup of a locally compact group. Let $(f_n)_{n\in\mathbb{N}} \subset \isom$ be a converging sequence w.r.t. the compact-open topology with  limit $f:=\lim_{n \rightarrow \infty} f_n$. It is easy to see that $f $ is an isometry. Thus, to finish the proof, it remains to check that $(f)_{\#}\m=\m$. This follows from the regularity of the measure, as we argue below.

Indeed, since the measures $(f_n)_{\#} \m=\m$ are all equal, they trivially converge weakly to $\m$. On the other hand we will show that the pushforward of $\m$ under $f_n$ weakly converges to the measure $(f)_\# \m$. Therefore, $(f)_{\#}\m=\m$ by the uniqueness of the limit. By using the definition of the pushforward and the continuity of $g\circ f_n$, it is enough to verify that for every bounded continuous function with bounded support, $g:X\to\RR$, it holds that
\begin{eqnarray*}
\lim_{n\to\infty} \int_X g \circ f_n \; \mathrm{d}\m =  \int_X g \circ f \; \mathrm{d}\m,
\end{eqnarray*}
to show that  $(f_n)_{\#} \m  \overset{w}{\to}(f)_{\#}\m$. After the following observation it is clear that this last equality holds. 

Assuming that $g$ is as above we can construct an $\m$-integrable function, $G$, such that $|g_n(x)|\leq G(x)$ for all $x\in X$ and make use of the dominated convergence theorem. Take for example the multiple of the characteristic function $G:=k_g \,\chi|_{B_{r}(y)}$, where $k_g$ is a bound on $g$ and $r\in\mathbb{R}$ and $y\in X$ are such that $\cup_{n\in\mathbb{N}} \supp (g \circ f_n) \subset B_{r}(y)$. The existence of such a pair $\{r,y\}$ is guaranteed because $g$ has bounded support, and because $f_n \to f$ converges uniformly in compact subsets. The integrability of $G$ follows from $\m$ being finite on bounded sets. 
\end{proof}
We have now done all the work needed to prove Theorem \ref{th:03}. Compare Theorem \ref{th:03} to Theorem 4.5 in \cite{Ch-Co-II}.

\begin{proof}[Theorem \ref{th:03}]

Being the groups of isometries and of measure-preserving isometries locally compact spaces (Theorem \ref{th:Da-Wa}, and Lemma \ref{cl:01}) we can rely on Gleason and Yamabe's characterization of Lie groups. That is to say, $\G\in \{\iso,\isom\}$ is a Lie group if and only if $\G$ does not have the small subgroup property. Note that the contrapositive statements to Propositions  \ref{pr:01} and \ref{pr:bigfix implies ssp} show that $\G$ not having the $\ssp$ is equivalent to:

\textbf{($\mathsf{a}'$)}\textit{ There exist $x\in X$, $0<s$, $0<\xi<1$ such that for every non-trivial subgroup $\Lambda \subset \iso$ there exists an isometry $g\in \Lambda$ with }
\begin{equation*}
\m(\fix{g}\cap B_{s}(x))< \xi\; \m(B_{s}(x)).
\end{equation*}
As already observed in Remark \ref{re:01}, conditions \conditiona of Theorem \ref{th:03} and \textbf{($\mathsf{a}'$)} are equivalent. Indeed, it is clear that \conditiona implies \textbf{($\mathsf{a}'$)}. The other implication follows after observing that the existence of an isomorphism $\mathbb{I}\neq g\in\G$ with $\m(\fix{g}\cap B_{s}(x))\geq \xi\; \m(B_{s}(x))=\mathsf{Fix}$ implies that the measure of the fix point set of every element in the generated subgroup $\left<g \right>\neq \mathbb{I}$ is greater than or equal to $\mathsf{Fix}$. This proves the first part of the theorem. 
 
Finally, note that granted that $\iso$  has the no small subgroup property, then $\isom$ has the same property since they both are endowed with the compact-open topology. This shows that $\isom$ is a Lie group if $\iso$ is a Lie group.
\end{proof}
\begin{rmk}\label{rm:generalization}
The conclusion of Theorem \ref{th:03} remains valid under the following weaker assumption on tangent cones: Suppose that $X$ has $\m$-$a.e.$ unique tangent cones and that the set of all metric spaces that appear as unique tangent cones, $\mathsf{Tan}$, is compact. Furthermore, assume that there exist a constant $0<\mathsf{k}_0$ such that $\D{\mathsf{H^\infty}}{1}{y^\infty}>\mathsf{k}_0$ for every subgroup $\mathbb{I}\neq H^\infty\leq \mathrm{ISO}(Y^\infty)$ for all  $(Y^\infty,\dis_{Y^\infty},y^\infty)\in \mathsf{Tan}$. Indeed, under these assumptions the proof of Proposition \ref{pr:01} can be copied verbatim and this is the only part of the argument that depends on the hypothesis of the behavior of tangent cones.

This observation becomes relevant in the study of metric measure spaces which have well behaved tangents which might not be Euclidean. For example, any corank $1$ Carnot group of dimension $(k + 1)$ equipped with a left-invariant measure is an essentially non-branching $\MCP$-space with unique non-Euclidean tangents by Rizzi \cite{Ri}, thus by our results their automorphism groups are Lie groups. More generally, Le Donne proved in \cite{Do} that geodesic spaces equipped with a doubling measure that have $\m$-$a.e.$ unique tangents have  $\m$-$a.e.$ Carnot groups as tangents.

In hope of a clearer exposition we opted to simply make a remark and not to present Theorem \ref{th:03} in full generality since, as mentioned, the alternative proof does not contributes with new ideas. A complete exposition and discussion can be found in the author's thesis. 
\end{rmk}
\begin{rmk}\label{rmk:iso iff isom}
In general the implications
$$\iso \text{ is a Lie Group}\quad  (\impliedby)\implies \quad \isom \text{ is a Lie Group}.$$
need not hold in any direction. In Theorem \ref{th:03} the implication to the right side can be shown relying on regularity properties of the measure. Whilst the other direction is more drastic. There exist spaces, even with ``very'' regular measures, for which $\isom$ is a Lie group but $\iso$ is not.
\end{rmk}

\section{Metric measure spaces with $\isom$ not a Lie group}\label{se:ex}
We show that the $\MCP$-condition is not strong enough to guarantee that the group of measure-preserving isometries is a Lie group. We also see that for a  geodesic and compact $\mms$ with finite measure $\isom$ might fail to be a Lie group. We start by presenting a well-known example to develop intuition about the connection between $\isom$ not being a Lie group, $\isom$ having small subgroups, and the branching of geodesics.
\begin{example}\label{ex:01}
Denote the circle of radius $r$ by $S_r$. The Hawaiian earring, $\mathbb{H} $, is the space we obtain after gluing the circles $\{S_{\frac{1}{n^2}}\,\|\,n\in\N\}$ by identifying one point of every circle, see Figure \ref{fg:ex}. Endow $\mathbb{H}$ with the arc-length distance $\dis_\mathbb{H}$ and the 1-dimensional Hausdorff measure $\mathcal{H}^1$. This makes $(\mathbb{H},\dis_\mathbb{H},\mathcal{H}^1)$ a compact, geodesic metric measure space with finite measure. Observe that $\mathrm{ISO}(H) = \mathrm{ISO}_\m(H)= \Pi^\infty \{\pm 1 \} $ where the compact-open topology coincides with the product topology. Hence $\iso $ is totally disconnected but not discrete. By definition, $\iso$ is not a Lie group since  $\iso / \iso_0$  is not discrete.
\end{example}
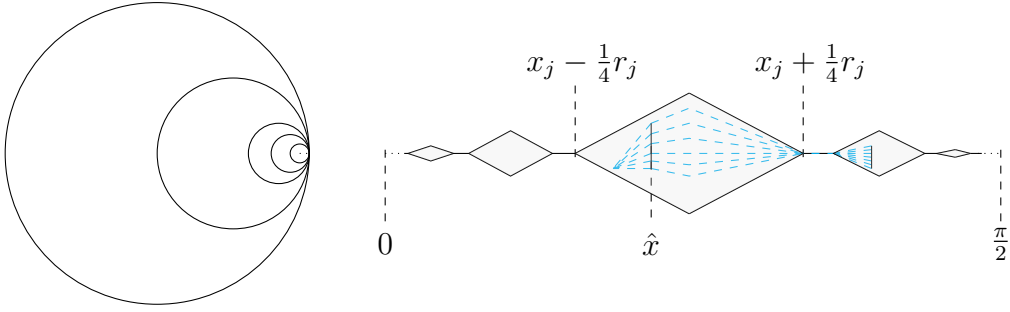
\begin{figure}
\begin{center}
\begin{tikzpicture}
\draw (2,2) circle (2);
\draw (3,2) circle (1);
\draw (3.6,2) circle (.4);
\draw (3.75,2) circle (.25cm);
\draw (3.875,2) circle (.125cm);
\draw[dotted] (3.875,2) -- (4,2);

\draw[dotted] (5,2) -- (5.25,2);
\draw (5.25,2) -- (5.3,2);
\draw (5.9,2)--(6.1,2) ;
\draw (7.2,2)--(7.5,2);
\draw (10.5,2) -- (10.9,2);
\draw (12.1,2) -- (12.26,2);
\draw (12.7,2) -- (12.85,2);
\draw[dotted] (12.8,2) -- (13.1,2);

\node at (5,.8) {0};
\node at (13.1,.8) {$\frac{\pi}{2}$};
\draw[thin][dashed] (5,1.1) -- (5,2.1);
\draw[thin][dashed] (13.1,1.1) -- (13.1,2.1);

\draw (5.3,2) -- (5.6,2.1)-- (5.9,2)-- (5.6,1.9)--cycle;
\fill[gray!20,nearly transparent]  (5.3,2) -- (5.6,2.1)-- (5.9,2)-- (5.6,1.9)--cycle;
\draw (6.1,2) -- (6.65,2.3)-- (7.2,2)-- (6.65,1.7)--cycle;
\fill[gray!20,nearly transparent]  (6.1,2) -- (6.65,2.3)-- (7.2,2)-- (6.65,1.7)--cycle;

\draw[thin] [dashed] (7.5,2.9) -- (7.5,1.9);
\node at (7.6,3.2) {$x_j - \frac{1}{4} r_{j}$};
\draw[thin] [dashed](10.5,2.9) -- (10.5,1.9);
\node at (10.6,3.2) {$x_j + \frac{1}{4} r_{j}$};

\draw[dashed][cyan] (8.5,2)--(8,1.8);
\draw[dashed][cyan] (8.5,2)--(10.5,2)--(10.9,2);
\draw[dashed][cyan] (11.4,2)--(10.9,2);
\draw[dashed][cyan] (8.5,2.13)--(8,1.8);
\draw[dashed][cyan](8.5,2.13)--(9,2.2)--(10.5,2);
\draw[dashed][cyan](11.4,1.8)--(10.9,2);
\draw[dashed][cyan] (8.5,2.26)--(8,1.8);
\draw[dashed][cyan](8.5,2.26)--(9,2.4)--(10.5,2);
\draw[dashed][cyan](11.4,1.9)--(10.9,2);
\draw[dashed][cyan] (8.5,2.4)--(8,1.8);
\draw[dashed][cyan](8.5,2.4)--(9,2.6)--(10.5,2);
\draw[dashed][cyan](11.4,1.95)--(10.9,2);
\draw[dashed][cyan] (8.5,1.9)--(8,1.8);
\draw[dashed][cyan](8.5,1.9)--(9,1.85)--(10.5,2);
\draw[dashed][cyan](11.4,1.85)--(10.9,2);
\draw[dashed][cyan] (8.5,1.8)--(8,1.8);
\draw[dashed][cyan](8.5,1.8)--(9,1.7)--(10.5,2);
\draw[dashed][cyan](11.4,2.1)--(10.9,2);
\draw[dashed][cyan](11.4,2.05)--(10.9,2);
\draw[black] (8.5,1.8) -- (8.5,2.4);
\draw[black] (11.4,1.8) -- (11.4,2.1);
\node at (8.5,.8) {$\hat{x}$};
\draw[thin] [dashed] (8.5,2.1) -- (8.5,1.1);

\draw (7.5,2) -- (9,2.8);
\draw (9,2.8) -- (10.5,2);
\draw (7.5,2) -- (9,1.2);
\draw (9,1.2) -- (10.5,2);
\fill[gray!20,nearly transparent] (7.5,2) -- (9,2.8) -- (10.5,2) -- (9,1.2) -- cycle;

\draw (10.9,2) -- (11.5,2.3) -- (12.1,2)-- (11.5,1.7) -- cycle;
\fill[gray!20,nearly transparent] (10.9,2) -- (11.5,2.3) -- (12.1,2)-- (11.5,1.7) -- cycle;

\draw (12.26,2) -- (12.5,2.05);
\draw (12.5,2.05) -- (12.7,2);
\draw (12.25,2) -- (12.5,1.95);
\draw (12.5,1.95) -- (12.7,2);
\fill[gray!20,nearly transparent] (12.26,2) -- (12.5,2.05) -- (12.7,2)-- (12.5,1.95) -- cycle;
\end{tikzpicture}
\caption{Hawaiian earring $\mathbb{H}$ and fancy necklace $\mathcal{FN}$. }
\label{fg:ex}
\end{center}
\end{figure}
\begin{example}\label{ex:02}
A \textit{fancy necklace} $(\mathcal{FN},\dis_\mathcal{FN},\m_\mathcal{FN})$ is a $\mGH$-limit of any sequence of $n$-diamonded $\mms$ $(\mathcal{N}^n,\dis_n,\m_n)$, called \text{$n$-necklaces}, which are inspired by a construction done by Ketterer and Rajala in \cite{Ke-Ra}. We define inductively the underlying sets $\mathcal{N}^n \subset \RR^2$. For this, first we write $I_k=[x_k-\frac{1}{4} \,r_k,x_k+\,\frac{1}{4} r_k]$ and define the diamond-shaped sets:
\begin{equation*}
D_{k}:=\left\{ (x,y) \in \mathbb{R}^2\;\; \| \;\; |y| \leq \frac{1}{9}(\frac{1}{4} \;r_k - |x-x_k|)\right \},
\end{equation*}
for $k\in\N$ and some sequence ${(r_n,x_n)}_{n\in \N}\subset \RR^2$ which we specify below.
We set $\mathcal{N}^0:=[0,\pi/2]\times \{0\} \subset \RR^2$ and define the $n$-necklace, $\mathcal{N}^n$, by replacing in $\mathcal{N}^{n-1}$ the segment $I_n \times \{0\}$ with $D_n$, for $n\in \N$, see Figure \ref{fg:ex}. To have a consistent construction we require that the sequence ${(r_n,x_n)}_{n\in \N}\subset \RR^2$ satisfies:
\begin{equation}\label{eq:rn}
\begin{aligned}
 &0<r_n\leq 1, \;\; \frac{1}{4}\,r_n\leq x_n \leq \frac{\pi}{2}-\frac{1}{4}\,r_n, \text{ and} & \\
&I_k\cap I_j = \emptyset  \text{ for } k<j.& 
\end{aligned}
\end{equation}
The last condition assures that different diamonds do not intersect. For $n\in \N \cup \{0\}$ endow $\mathcal{N}^n$ with the distance, $\dis_n=\dis_{L^{\infty}}$, induced from the  $L^\infty$-norm in $\RR^2$. To set a measure on the $n$-necklace we start by defining  $ \m_{D_n}\Lt \mathcal{L}^2$ on $D_n$ by
\begin{equation*}
\frac{\mathrm{d}\m_{D_n}}{\mathrm{d}\mathcal{L}^2}(x):=  \left[ \frac{2}{9}(\frac{1}{4}r_n - |x-x_n|)\right]^{-1} \cos^2(x)\, \chi|_{D_n}(x)\qquad \hfill \text{for } n\in \N,
\end{equation*}
and $\chi|_{A}$ the characteristic function of the set $A$. Denote by $\mathsf{D}^n =\cup_{1\leq k \leq n} D_k$, $\mathsf{L}^n:=\mathcal{N}^n \setminus \mathsf{D}^n$, and $L^0:=\mathcal{N}^0$. We set on $\mathcal{N}^n$ the measure $\m_{\mathcal{N}^n}$ defined as
\begin{eqnarray*}
\mathrm{d}\m_{\mathcal{N}^n}&:=&\mathrm{d}\m_{\mathsf{D}^n} + \cos^2(x) \,\mathrm{d}\mathcal{H}^1|_{\Ln}, \qquad \hfill \text{ where }\\
\m_{\mathsf{D}^n}&:=& \sum_{1\leq k\leq n }\m_{D_k}.
\end{eqnarray*}
In words, $\m_{\mathcal{N}^n}$ has a 2-dimensional contribution from $\Dn$ with constant density for fixed $x$-coordinate, and a 1-dimensional contribution from $\mathsf{L}^n$ absolutely continuous w.r.t. the 1-dimensional Hausdorff measure. Finally, we define the fancy necklace as the limit $(\mathcal{FN},\dis_\mathcal{FN},\m_\mathcal{FN}) := \mGH\-\lim_{n\to \infty} (\mathcal{N}^n,\dis_n,\m_n)$. The existence of the limit follows from the compactness of $\MCP$-spaces because Lemma \ref{lm:mcp} below shows that $n$-necklaces satisfy the $\mcp{2}{3}$-condition. 

We fix some notation before continuing. Given a sequence $\{(r_i,x_i)\}_{n\in\N}$, denote by $(P\mathcal{N}_k^{n-1},\dis_{n-1},\m'_{n-1})$ the projected $(n-1)$-necklace obtained from the sequence $\{(r_i,x_i)\}_{i\neq k}$ for $1\leq k\leq n$. That is, $P\mathcal{N}_k^{n-1}$ is the necklace with $n-1$ diamonds obtained by removing the $k$th-diamond from $\mathcal{N}^n$. We will write $x_n^\pm=x_n \pm 1/4 \, r_n$, and define the height as $h(w,B):=\mathcal H^1 (B \cap \{ x=w \})$ for $x\in \mathcal{N}^n$ and $B \subset \mathcal{N}^n$. Moreover, let
\begin{eqnarray*}
\Upsilon(B_0,B_1)&:= &\big\{\gamma \in \geo(\mathcal{N}^n) \,\|\, \gamma \text{ is a line segment with }\gamma_i\in B_i \text{, } i=0,1 \big\}.
\end{eqnarray*}
The set $\Upsilon(B_0,B_1)$ consists of Euclidean geodesics that go from $B_0$ to $B_1$. Lastly, for $|y|\leq r_k/36 $ and $k\in \N$ define $\gamma^{k,y}\in \geo(\mathcal{N}^n)$ as the geodesic obtained after gluing $\Upsilon((x_k^-,0),(x_k,y))$ with $\Upsilon((x_k,y),(x_k^+,0))$ and reparametrizing. The image of $\gamma^{k,y}$ is the union of a line segment going from the left vertex of $D_k$ to  $(x_k,y)$ with its reflection over $\{x=x_k\}$. Define $M^k$ as the set of all such geodesics.
\begin{lemma}\label{lm:mcp}
The $\mms$ $(\mathcal{FN},\dis_\mathcal{FN},\m_\mathcal{FN})$ satisfies the $\mcp{2}{3}$-condition.
\end{lemma}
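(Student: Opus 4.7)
The plan is to exploit the stability of the $\mcp{K}{N}$-condition under $\mGH$-convergence — recalled just before Definition \ref{def:MCP} — together with the defining property of $(\mathcal{FN},\dis_\mathcal{FN},\m_\mathcal{FN})$ as the $\mGH$-limit of the $n$-necklaces $(\mathcal{N}^n,\dis_n,\m_n)$. Therefore, it suffices to show that each $n$-necklace satisfies $\mcp{2}{3}$, which I would prove by induction on $n$.

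For the base case $n=0$, the space $(\mathcal{N}^0,|\cdot|,\cos^2(x)\,\mathrm{d}\mathcal{H}^1)$ on $[0,\pi/2]$ is (isometric to) the one-dimensional half-model for $\mcp{2}{3}$: the model density $\cos^{N-1}(\sqrt{K/(N-1)}\,x)$ reduces to $\cos^2(x)$ when $K=2$ and $N=3$, and the inequality of Definition \ref{def:MCP} is verified directly by transporting along the unique geodesics of $[0,\pi/2]$ and checking the monotonicity $\tan^2(z)\leq\tan^2(z/s)$ for $s\in[0,1]$.

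For the inductive step I would introduce the $1$-Lipschitz projection $P\colon\mathcal{N}^n\to[0,\pi/2]$, $(x,y)\mapsto x$. A Fubini computation using the explicit density on $D_n$ gives $P_\#\m_n=\cos^2(x)\,\mathrm{d}\mathcal{H}^1$: the quantity $\tfrac{2}{9}(\tfrac{1}{4}r_n-|x-x_n|)$ appearing in the denominator of $\mathrm{d}\m_{D_n}/\mathrm{d}\mathcal{L}^2$ is precisely the vertical height $h(x,D_n)$ at horizontal coordinate $x$, so integrating the density against the vertical slice of $D_n$ returns $\cos^2(x)$. Thus $\mathcal{N}^n$ fibers over the base $\mcp{2}{3}$-space with the correct projected measure. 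Given $x_0\in\mathcal{N}^n$ and a Borel set $A\subset B_\pi(x_0)$ with $0<\m_n(A)<\infty$, I would build the required plan $\pi\in\mathcal P(\geo(\mathcal{N}^n))$ in two stages. First, take the MCP plan $\bar\pi$ supplied by the base case for the projected data $\delta_{P(x_0)}$ and $\m_n(A)^{-1}P_\#(\m_n|_A)$. Second, lift each geodesic of $\bar\pi$ to a genuine geodesic of $\mathcal{N}^n$ sharing the same horizontal projection: outside diamonds the lift is the obvious horizontal segment on $\mathsf{L}^n$, while inside a diamond $D_k$ I would use the family $\{\gamma^{k,y}\}\subset M^k$ (together with analogous $\Upsilon$-type segments for partial traversals), disintegrating in the $y$-coordinate with the conditional density of $\m_n|_A$ on the vertical slice at the given $x$-coordinate. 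All $\gamma^{k,y}$ have the same length as the horizontal crossing of $D_k$, so this $y$-spreading respects geodesicity; the flexibility is special to the $L^\infty$-geometry of the necklace.

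The technical core, and the main obstacle, is the verification of the inequality (\ref{eq:mcp}) inside the diamonds, where $\m_n$ is genuinely two-dimensional. The decisive cancellation is that at time $t$ the image of $\{\gamma^{k,y}\}_y$ under $\e_t$ is a vertical segment of length exactly $h(x(t),D_k)$, the full vertical height of $D_k$ at the corresponding horizontal coordinate $x(t)$; consequently the $y$-pushforward density picks up precisely the factor $h(x,D_k)^{-1}$ which appears as the vertical normalization in $\mathrm{d}\m_n|_{D_k}$. Combined with the horizontal bound on $(\e_t)_\#\bar\pi$ coming from the base case, this reduces the two-dimensional inequality inside $D_k$ to the one-dimensional $\mcp{2}{3}$-bound on the projected base, while the Jacobian factor $t\sin^2(tl)/\sin^2(l)$ transfers unchanged from $\bar\pi$ to $\pi$ since $P$ preserves lengths on the chosen lifts. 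Once $\mcp{2}{3}$ is established for every $n$-necklace, the $\mGH$-stability invoked at the start gives the conclusion for $(\mathcal{FN},\dis_\mathcal{FN},\m_\mathcal{FN})$.
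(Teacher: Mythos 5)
Your skeleton is the same as the paper's: invoke $\mGH$-stability of the $\MCP$-condition, reduce to the $n$-necklaces, and neutralize the diamonds by spreading the transport over full vertical slices via the families $M^k$, so that the vertical normalization $h(x,D_k)^{-1}$ built into $\mathrm{d}\m_{D_k}/\mathrm{d}\mathcal{L}^2$ cancels and the problem collapses onto the one-dimensional base $[0,\pi/2]$ with density $\cos^2(x)$. (The paper organizes this not as an induction on $n$ but as a projection argument showing it suffices to treat the $m$-necklaces with $m\le 2$; your inductive step never actually uses the inductive hypothesis, so the two framings amount to the same reduction.) That part of your argument is sound for every diamond containing neither the source point nor the target slice.

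The gap is in the one configuration this reduction cannot remove: the source $\tilde z$ lying strictly inside a diamond, say $D_1$, with the target slice inside another, $D_2$. There the geodesics emanate from the single point $\tilde z$, not from a full vertical slice, so for small $t$ the image $\e_t(\Gamma)$ is a proper subset $A_t$ of $D_1\cap\{x=x_t\}$ shrinking to a point as $t\to 0$. The ``decisive cancellation'' you invoke fails on this initial stretch: the density of $\nt$ is of the form $\tfrac1t\, h(x_t,D_1)/h(x_t,A_t)\sim 1/t^2$ rather than $1/t$, and the required inequality is genuinely two-dimensional --- it does \emph{not} reduce to the $\mcp{2}{3}$ bound on the projected base, which only budgets for the horizontal factor $1/t$. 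The entire second half of the paper's proof handles exactly this phase: one expands from $\tilde z$ only up to a carefully chosen slice $\hat x$ reached at time $\hat t\le 1/5$, keeping the relative height of $A_t$ in $D_1$ matched to that of $A_{x'}$ in $D_2$ as in (\ref{eq:h}), bounds the vertical stretching by $h(x_t,D_1)/h(x,D_1)\le \frac54-\frac t4$ using the shape of the diamond, and absorbs the surplus via the numerical inequality $\frac54-\frac t4\le t\,\sin^2(d)/\sin^2(t\,d)$ for $(t,d)\in[0,1/5]\times(0,\pi/2+1/4)$ from Ketterer--Rajala. This is where the slope $\frac19$ in the definition of $D_k$, the normalization $r_n\le 1$, and the choice of $\hat x$ all enter; without this quantitative step the lemma is not proved. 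Your proposal asserts the reduction to the base case uniformly in $t$, which is false on $[0,\hat t]$, so this estimate must be supplied.
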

\begin{proof}
The stability of the $\MCP$-condition assures that it's enough to show that $(\mathcal{N}^n,\dis_n,\m_n)$ $\in \mcp{2}{3}$ for every $n\in\N$ and every sequence $\{(x_k,r_k)\}\subset \RR^2$ that satisfies (\ref{eq:rn}), so we fix $n\in \N$ and such sequence. We proceed using key ideas from a proof in \cite{Ke-Ra}.

The conditions of Definition \ref{def:MCP} require that for every $\tilde{z}=(\tilde{x},\tilde{y})$ and $A \subset \mathcal{N}^n$ with $0 < \m_n(A) < \infty$ we give $\pi \in \mathcal{P} (\geo(\mathcal{N}^n))$ such that $(\e_0)_{\#} \pi=\delta_{\tilde{z}}$, $(\e_1)_\# \pi=\m_n(A)^{-1}\m_n|_{A}$, and inequality \ref{eq:mcp} is valid. 
Given $\tilde{z}$ and $A$ we will choose a set of geodesics $\Gamma=\Gamma_{\tilde{z},A} \subset \geo(\mathcal{N}^n)$ and define $\pi$ as the uniformly distributed probability measure over the set $\Gamma$. However, we reduce before the number of transports that need to be studied. 

To begin with, note that we can analyze separately  the sets  $A_{x'}= A \cap \{x=x'\}$ for a fixed $x'$. The simplification can be made because we will assure that the first coordinate contributes to the dilatation of the measure $\nt:=(\e_t)_\#\pi$ a factor equal to $t$, by picking geodesics with projection $\mathrm{p}_1(\gamma(t))= (1-t)\,\tilde{x} + t \,x' $ for $(x',y')=z'\in A$. Therefore the analysis reduces to estimating separately  the dilatation of the sets $A_{x'}$ for every $x'\in \mathrm{p}_1(A)$. Accordingly, to verify the $\mcp{2}{3}$-condition, it is enough to provide a set $\Gamma\subset \geo(\mathcal{N}^n)$ such that $\e_0(\Gamma)=\tilde{z}$, $\e_1(\Gamma)\in A_{x'}$, and
\begin{equation}\label{eq:mcp ex}  
\frac{\mathrm{d}\nt}{\mathrm{d}\m_n} (\gamma_t)\leq \frac{\sin^2 (l(\gamma))}{t \sin^2(t \, l (\gamma))} \frac{\mathrm{d}\mathfrak{n}_1}{\mathrm{d}\m_n} (\gamma_1) \qquad \text{for all } t\in[0,1],\, x' \in \mathrm{p}_1 (A), \, \gamma \in \Gamma.  
\end{equation}
\begin{clm*} It's sufficient to check that $(\mathcal{N}^m,\dis_m,\m_m) \in \mcp{2}{3}$ for $m=0,1,2$.
\end{clm*}
\begin{proof}\renewcommand{\qedsymbol}{$\blacksquare$}
First note that if $\tilde{z},z'\notin D_k$ for some $k\in\{1,...,n\}$ then we can choose $\Gamma$ in a way that makes the density of $\frac{\mathrm{d}\nt}{\mathrm{d}\m_n}$ independent of $y\in \mathrm{p}_2(D_k)$, that is, $\frac{\mathrm{d}\nt}{\mathrm{d}\m_n}((x,y))=\frac{\mathrm{d}\nt}{\mathrm{d}\m_n}((x,0))$ for $(x,y)\in D_k$. We can do this by choosing geodesics whose restriction to $D_k$ is exactly the set $M^k$. This choice of $\Gamma$ grants that the analysis of the transport of the measure inside $(\mathcal{N}^n,\dis_n,\m_n)$ is equivalent to the analysis of the transport of the measure inside the projected $(n-1)$-necklace $(P\mathcal{N}_k^{n-1},\dis_{n-1},\m'_{n-1})$. Furthermore, observe that if $n>2$ there exist at least $(n-2)$ such diamonds, $D_{k_i}$, for every $\tilde{z},z'\in \mathcal{N}^n$. Thus, for every transport inside $\mathcal{N}^n$ we can project at least $(n-2)$ times, reducing the task to checking the $\mcp{2}{3}$-condition in the $m$-necklaces, for $m=0,1,2$.
\end{proof}
In \cite{St-II} and \cite{Ke-Ra} it is shown that the $0$-necklace and $1$-necklace satisfy the $\mcp{2}{3}$-condition, this covers the cases of $m=0,1$ so we move to $m=2$.\footnote{More precisely, the proof in \cite{Ke-Ra} can be repeated verbatim by doing minor modifications.} We assume, because of symmetry, that $\tilde{x} \leq x$ and fix $\tilde{z} \in \mathcal{N}^2$ and $A_{x'}\subset \mathcal{N}^2$. We conclude from the preceding claim that the only situation left to check is that of  $\tilde{x}\in D_1$ and $x'\in D_2$. 
Let's first explain intuitively the way we transport the measure in this case. We start by expanding the measure uniformly from $\tilde{x}$ to a set $\hat{A}_{\hat{x}}$ with the same relative height as $A_{x'}$. Then we transport the measure from $\hat{A}_{\hat{x}}$ to $x_1^+$ without changing the relative height of the set $A_t:=\e_t(\Gamma)$ with respect to $D_1 \cap \{x=\gamma_t\}$. We continue through $\mathsf{L}^2$ and expand again keeping the heights ratio constant from $x_2^-$ to $A_{x'}$. The image of a transporting geodesic is the union of segments of straight lines described below, see Figure \ref{fg:ex}. In detail, to define $\Gamma$ first choose any set $\hat{A}_{\hat{x}}\subset D_1 \cap \{x=\hat{x}\}$ such that 
\begin{equation}\label{eq:h}
\frac{h(\hat{x},D_1)}{h(\hat{x},\hat{A}_{\hat{x}})} =\frac{h(x',D_2)}{h(x',A_{x'})},
\end{equation}
for $\hat{x} = \frac{1}{5}(\frac{r_1}{4}+4(\tilde{x}-x_1))+x_1$. Write $\hat{t}:= \frac{\hat{x} - \tilde{x}}{x-\tilde{x}}$, $t_1:= \frac{x_1^+ - \tilde{x}}{x-\tilde{x}}$, and $t_2:= \frac{x_2^- - \tilde{x}}{x-\tilde{x}}$ for the times at which the $x$-coordinate of any geodesic $\gamma \in \Upsilon(\tilde{x},A_{x'})$ is equal to $\hat{x}$, $x_1^+$, and $x_2^-$. Geodesics in $\Upsilon(\tilde{x},D_1 \cap \{x=\hat{x}\} )$ have the same length. Now define $\Gamma$ as the set of all geodesics satisfying the following:
$\rest{0}{\hat{t}}(\gamma)\in \Upsilon(\hat{x},\hat{A}_{\hat{x}})$,
 $\rest{\hat{t}}{t_1}(\gamma)\in M^1$,
$\rest{t_1}{t_2}(\gamma)\in \Upsilon(x_1^+,x_2^-)$, and 
$\rest{t_2}{1}(\gamma)\in M^2|_{A_{x'}}$, where $M^2|_{A_{x'}}$ is the subset of geodesics of $M^2$ that cross through $A_{x'}$. 

We estimate the density of the corresponding measure, for $\gamma (t)=(x_t,y_t)$ we have  that
\begin{equation*}\label{eq: est}
\frac{\mathrm{d}\nt}{\mathrm{d}\m}(\gamma_t) = \frac{1}{t} \frac{h(x_t,D_1)}{h(x_t,A_{t})} = \frac{1}{t^2} \frac{h(x_t,D_1)}{h(\hat{x},\hat{A}_{\hat{x}})} \frac{h(\hat{x},D_1)}{ h(\hat{x},D_1)} = \frac{1}{t^2} \frac{h(x_t,D_1)}{ h(x,D_1)} \frac{\mathrm{d}\mathfrak{n}_1}{\mathrm{d}\m}(\gamma_1),
\end{equation*}
for $0\leq t\leq \hat{t}$. The shape of the diamond $D_1$ allows to estimate   $\frac{h(x_t,D_1)}{ h(x,D_1)} \leq \left ( \frac{5}{4}-\frac{t}{4} \right)$. We can bound the time when the geodesics reach $\hat{x}$ by $\hat{t} \leq r_1 / 5 \leq 1/5$, and the length of the geodesics is necessarily $l\leq \pi/2$. Moreover in  \cite{Ke-Ra} the estimate $ \frac{5}{4}-\frac{t}{4} \leq t\, \frac{\sin^2(d)}{\sin^2(t\,d)}$ for all $(t,d) \in [0, 1/5]  \times  (0, \pi/2 + 1/4)$ is proved. Putting inequalities together we obtain inequality (\ref{eq:mcp ex}) for $t\in[0,\hat{t}]$. 

To finish, note that for $t\in [\hat{t},1]$, the relative density of $\nt$ is independent of the $y$-coordinate. Thus, its density is equal to the one of the transport in the $0$-necklace, which is a $\mathsf{MCP}(2,3)$-space. This shows that inequality (\ref{eq:mcp ex}) is satisfied also for $t\in [\hat{t},1]$, hence, in the complete interval $t\in [0,1]$.
\end{proof}
Observe that the automorphism groups of $(\mathcal{FN},\dis_\mathcal{FN},\m_\mathcal{FN})$ are  $\mathrm{ISO}(\mathcal{FN}) = \mathrm{ISO}_{\m}(\mathcal{FN}) = \Pi^\infty \{\pm 1 \}$. Thus we obtain Proposition $\ref{co:03}$, confirming that the measure contraction property is not strong enough to guarantee that the isometry group or the measure-preserving isometry group are Lie groups. 
\end{example}


\begin{thebibliography}{AGMR15}

\bibitem[AGMR15]{Am-Gi-Mo-Ra}
Luigi Ambrosio, Nicola Gigli, Andrea Mondino, and Tapio Rajala.
\newblock Riemannian {R}icci curvature lower bounds in metric measure spaces
  with $\sigma$-finite measure.
\newblock {\em Trans. Amer. Math. Soc.}, 367(no. 7):4661--4701, (2015).

\bibitem[AGS14]{Am-Gi-Sa}
Luigi Ambrosio, Nicola Gigli, and Giuseppe Savar{\'e}.
\newblock Metric measure spaces with {R}iemannian {R}icci curvature bounded
  from below.
\newblock {\em Duke Math. J.}, 163(7):1405--1490, 05 2014.

\bibitem[CC00]{Ch-Co-II}
Jeff Cheeger and Tobias~H. Colding.
\newblock On the structure of spaces with {R}icci curvature bounded below {II}.
\newblock {\em J. Differential Geom.}, 54(1):13--35, 2000.

\bibitem[CM16]{Ca-Mo}
Fabio Cavalleti and Andrea Mondino.
\newblock Optimal maps in essentially non-branching spaces.
\newblock {\em arXiv:1609.00782}, 2016. To appear in Comm. Cont. Math., DOI: 10.1142/S0219199717500079.

\bibitem[CN12]{Co-Na}
Tobias~H. Colding and Aaron Naber.
\newblock Sharp {H}{\"o}lder continuity of tangent cones for spaces with a
  lower {R}icci curvature bound and applications.
\newblock {\em Annals of Mathematics}, 176(2):1173--1229, 2012.

\bibitem[DBI01]{Bu-Bu-Iv}
Y.~Burago D.~Burago and S.~Ivanov.
\newblock {\em A course in metric geometry}, volume~33 of {\em Grad. Studies in
  Math.}
\newblock Americ. Math. Soc., 2001.

\bibitem[Don11]{Do}
Enrico~Le Donne.
\newblock Metric spaces with unique tangents.
\newblock {\em Annales Academi{\ae} Scientiarum Fennic{\ae} Mathematica},
  Volumen 36:683--694, 2011.

\bibitem[DW28]{Da-Wa}
D.~van Dantzig and B.~L. van~der Waerden.
\newblock {\"U}ber metrisch homogene r{\"a}ume.
\newblock {\em Abhandlungen aus dem Mathematischen Seminar der Universit{\"a}t
  Hamburg}, 6(1):367--376, 1928.

\bibitem[EKS15]{Er-Ku-St}
Matthias Erbar, Kazumasa Kuwada, and Karl-Theodor Sturm.
\newblock On the equivalence of the entropic curvature-dimension condition and
  {B}ochner's inequality on metric measure spaces.
\newblock {\em Inventiones mathematicae}, 201(3):993--1071, 2015.

\bibitem[Fuk86]{Fu}
Kenji Fukaya.
\newblock Theory of convergence for {R}iemannian orbifolds.
\newblock {\em Japanese journal of mathematics. New series}, 12(1):121--160,
  1986.

\bibitem[FY92]{Fu-Ya-92}
Kenji Fukaya and Takao Yamaguchi.
\newblock The fundamental groups of almost nonnegatively curved manifolds.
\newblock {\em Annals of Mathematics}, 136(2):253--333, 1992.

\bibitem[FY94]{Fu-Ya-94}
Kenji Fukaya and Takao Yamaguchi.
\newblock Isometry groups of singular spaces.
\newblock {\em Mathematische Zeitschrift}, 216(1):31--44, 1994.

\bibitem[Gle52]{Gl}
Andrew~M. Gleason.
\newblock Groups without small subgroups.
\newblock {\em Annals of Mathematics}, 56(2):193--212, 1952.

\bibitem[GMR15]{Gi-Mo-Ra}
Nicola Gigli, Andrea Mondino, and Tapio Rajala.
\newblock Euclidean spaces as weak tangents of infinitesimally {H}ilbertian
  metric measure spaces with {R}icci curvature bounded below.
\newblock {\em Journal fur die Reine und Angew. Math.}, Vol. 705:233--244,
  (2015).

\bibitem[GRS15]{Gi-Ra-St}
Nicola Gigli, Tapio Rajala, and Karl-Theodor Sturm.
\newblock Optimal maps and exponentiation on finite-dimensional spaces with
  {R}icci curvature bounded from below.
\newblock {\em The Journal of Geometric Analysis}, pages 1--16, 2015.

\bibitem[GSR16]{Gu-Sa}
Luis Guijarro and Jaime Santos-Rodriguez.
\newblock On the isometry group of {RCD}*(k,n)-spaces.
\newblock {\em arXiv:1608.06467}, 2016.

\bibitem[Kel16]{Ke-16}
Martin Kell.
\newblock Sectional curvature-type conditions on {F}insler-like metric spaces.
\newblock {\em ArXiv e-prints.}, 2016.

\bibitem[Ket15]{Ke}
Christian Ketterer.
\newblock Cones over metric measure spaces and the maximal diameter theorem.
\newblock {\em J. Math. Pures App}, (9) 103(5):1228--1275, 2015.

\bibitem[KM16]{Ke-Mo}
Martin Kell and Andrea Mondino.
\newblock On the volume measure of non-smooth spaces with {R}icci curvature
  bounded below.
\newblock {\em arXiv:1607.02036}, 2016.

\bibitem[KN63]{Ko-No}
Kobayashi and Nomizu.
\newblock {\em Foundations of Differential Geometry}, volume~I.
\newblock John Wiley \& Sons, 1963.

\bibitem[KR15]{Ke-Ra}
C.~Ketterer and T.~Rajala.
\newblock Failure of topological rigidity results for the measure contraction
  property.
\newblock {\em Potential Anal}, 42(3):645--655, 2015.

\bibitem[LV09]{Lo-Vi}
John Lott and Cedric Villani.
\newblock Ricci curvature for metric-measure spaces via optimal transport.
\newblock {\em Annals of Mathematics}, 169(3):903--991, 2009.

\bibitem[MN14]{Mo-Na}
A.~Mondino and A.~Naber.
\newblock Structure theory of metric-measure spaces with lower {R}icci
  curvature bounds i.
\newblock {\em arXiv:1405.2222}, 2014.

\bibitem[MS39]{My-St}
S.~B. Myers and N.~E. Steenrod.
\newblock The group of isometries of a {R}iemannian manifold.
\newblock {\em Annals of Mathematics}, 40(2):400--416, 1939.

\bibitem[Oht14]{Oh-MCP}
Shinichi Ohta.
\newblock Examples of spaces with branching geodesics satisfying the
  curvature-dimension condition.
\newblock {\em Bulletin of the London Mathematical Society}, 46(1):19--25,
  2014.

\bibitem[Pet11]{Pe}
Anton Petrunin.
\newblock Alexandrov meets {L}ott--{V}illani--{S}turm.
\newblock {\em Muenster Journal of Mathematics}, (4):53--64, 2011.

\bibitem[Riz16]{Ri}
Luca Rizzi.
\newblock Measure contraction properties of carnot groups.
\newblock {\em Calculus of Variations and Partial Differential Equations},
  55(3):1--20, 2016.

\bibitem[RS14]{Ra-St}
Tapio Rajala and Karl-Theodor Sturm.
\newblock Non-branching geodesics and optimal maps in strong
  {CD}(k,$\infty$)-spaces.
\newblock {\em Calculus of Variations and Partial Differential Equations},
  50(3):831--846, 2014.

\bibitem[Stu06a]{St-I}
Karl-Theodor Sturm.
\newblock On the geometry of metric measure spaces.
\newblock {\em Acta Mathematica}, 196(1):65--131, 2006.

\bibitem[Stu06b]{St-II}
Karl-Theodor Sturm.
\newblock On the geometry of metric measure spaces {II}.
\newblock {\em Acta Mathematica}, 196(1):133--177, 2006.

\bibitem[Tao14]{Tao}
Terence Tao.
\newblock {\em Hilbert's fifth problem and related topics}.
\newblock Number 153 in Graduate Studies in Math. Amer. Math. Soc., 2014.

\bibitem[Yam53]{Ya53}
Hidehiko Yamabe.
\newblock A generalization of a theorem of {G}leason.
\newblock {\em Annals of Mathematics}, 58(2):351--365, 1953.

\bibitem[Yam99]{Ya}
Takao Yamaguchi.
\newblock Isometry groups of spaces with curvature bounded above.
\newblock {\em Mathematische Zeitschrift}, 232(2):275--286, 1999.

\end{thebibliography}

\end{document}